\theoremstyle{definition}
\newtheorem{definition}{Definition}[section]
\theoremstyle{plain}
\newtheorem{lem}[definition]{Lemma}
\newtheorem{teo}[definition]{Theorem}
\newtheorem{Propos}[definition]{Proposition}
\newtheorem{Cor}[definition]{Corollary}
\theoremstyle{remark}
\newtheorem{remark}{Remark}[section]
\newcommand{\orlnor}{\|_{L^{\phi}}}
\newcommand{\lphi}{L^{\phi}}
\newcommand{\lpsi}{L^{\phi^{\star}}}
\newcommand{\wphi}{W^{1}\lphi}
\newcommand{\wphit}{W^{1}\lphi_T}
\newcommand{\wphittilde}{\widetilde{W}^1\lphi_T}
\newcommand{\sobnor}{\|_{W^{1}\lphi}}
\newcommand{\sobnort}{\|_{W^{1}\lphi_T}}
\newcommand{\rr}{\mathbb{R}}
\newcommand{\labitem}[2]{%
\def\@itemlabel{\textnormal{\textbf{(#1)}}}
\item
\def\@currentlabel{\textnormal{\textbf{(#1)}}}\label{#2}}
\def\namedlabel#1#2{\begingroup
    #2%
    \def\@currentlabel{#2}%
    \phantomsection\label{#1}\endgroup
}
\title{On the existence of periodic solutions for $\phi$-Laplacian inclusion systems\thanks{   The research has been supported by SECyT-UNRC, grant C561 and FCEyN-UNLPam, grant PI 77 M.
}}
\author{
Stefania M. Demaria and  Fernando D. Mazzone \\
Dpto. de Matem\'atica, Facultad de Ciencias Exactas, F\'{\i}sico-Qu\'{\i}micas y Naturales\\
Universidad Nacional de R\'{i}o Cuarto\\
(5800) R\'{\i}o Cuarto, C\'ordoba, Argentina,\\
\url{sdemaria@exa.unrc.edu.ar},
\url{fmazzone@exa.unrc.edu.ar} \\
}
\date{}
\begin{document}
\maketitle
\begin{abstract}
We apply the direct method of the calculus of variations to prove existence of periodic solutions for differential inclusion systems involving an anisotropic $\phi$-Laplacian operator.
 \end{abstract}

 \noindent\textit{keywords:} differential, inclusions, periodic, variational, methods.

\section{Introduction}

Let $\phi:\mathbb{R}^n \to [0,+\infty)$ be a convex and differentiable function, and let $T > 0$. We consider a function $F : [0,T] \times \mathbb{R}^n \to \mathbb{R}^n$, which is locally Lipschitz in $x$ for almost every $t \in [0,T]$. The Clarke generalized gradient of $F$ with respect to $x$ is denoted by $\partial F$ (see \cite{clarke1990optimization, clarke2013functional}).

The aim of this paper is to establish the existence of solutions to the following differential inclusion problem with periodic boundary conditions:

\begin{equation}\label{eq_1}\left\{
\begin{array}{ll}
   \frac{d}{dt} \nabla\phi(u'(t))\in \partial F(t,u(t)) \quad \hbox{a.e.}\ t \in (0,T)\\
    u(0)-u(T)=u'(0)-u'(T)=0,
\end{array}\right.\tag{$P$}
\end{equation}

We obtain solutions to problem \eqref{eq_1} using the direct method in the calculus of variations. To this end, we introduce the following functional, also referred as \emph{action integral}:
\begin{equation}\label{funcional}
    \mathcal{I}(u):=\int_{0}^{T}\phi(\dot u(t))dt+\int_{0}^{T}F(t, u(t))dt.\tag{I}
\end{equation}
The objective is to show that, under suitable assumptions, the functional $\mathcal{I}$ attains a minimum in the Sobolev–Orlicz vector space $W^{1,\phi}_T([0,T], \mathbb{R}^n):=W^{1,\phi}([0,T], \mathbb{R}^n)\cap \{u \mid u(0)=u(T)\}$, and that the minimizers of $\mathcal{I}$ are solutions to problem \eqref{eq_1}.

The case of problem \eqref{eq_1} with a Fréchet differentiable function $F$ and  equations, instead of an inclusions, has been extensively studied in the literature.
 In   \cite{mawhin2010critical},  J. Mawhin and M. Willem applied   various methods: direct, dual, saddle points,  minimax, topological degree to this problem when  $\phi(x)=|x|^2/2$. This theory was extended in several articles,  see for example \cite{tang1995periodic,tang1998periodic,tang2001periodic,wu1999periodic,zhao2004periodic}. The \emph{$p$-laplacian operador } ($\phi(x)=|y|^p/p$,  $1<p<\infty$) were considered in  \cite{Tang2010,tian2007periodic}.  If

\begin{equation}\label{eq:phip_1p_2}\phi=\phi_{p_1,p_2}(y_1,y_2):=\frac{|y_1|^{p_1}}{p_1}+\frac{|y_2|^{p_2}}{p_2},\end{equation}
then \eqref{eq_1}  becomes in a periodic boundary problem for a  $(p_1,p_2)$-laplacian system.  This class of problem was studied in
\cite{li2014periodic,pasca2010periodic,pacsca2010some,pasca2011some,pasca2016periodic,yang2012periodic,yang2013existence}.  The question of finding periodic solutions for systems involving general $\phi$-laplacian operators was studied in \cite{acinas2015some-2,Mazzone2019,acinas2022,acinas2019clarke,Acinas2017}, while the case of differential inclusions was addressed in \cite{Pasca2011, pacsca2007periodic,pacsca2008subharmonic,Ning2015}. Hence \eqref{eq_1}  contains several problems that have been considered by many authors in the past. Moreover, our results improve, in many cases, the existing ones in the literature, even when restricted to the particular case of $(p_1,p_2)$-Laplacian systems. The latter is discussed at greater length in the  Section \ref{sec:applications}. For these reasons, we believe that anisotropic Orlicz-Sobolev spaces offer a suitable framework for unifying many known results.

\section{Main results}
We aim to formulate some of our results within the more general framework of Banach spaces, as we believe this may prove useful for future applications to partial differential equations.

 Throughout this paper, we denote by $(X,\|\cdot\|_X)$ an arbitrary Banach space. In the particular case where $X = \mathbb{R}^n$, we assume that $\|\cdot\|_X$ is the Euclidean norm and  denote it simply by  $|\cdot|$. 
 
Let us start by giving a brief introduction to Orlicz and Orlicz–Sobolev spaces of Banach space-valued functions. First, we introduce  $G$-functions.  References for  these topics include \cite{orliczvectorial2005,Mazzone2019,trudinger1974imbedding,acinas2019clarke,chamra2017anisotropic}.

  Following to Trudinger (see \cite{trudinger1974imbedding}) we say that  $\phi: X \to [0,+\infty]$   is a \emph{$G$-function} if it satifies that $\phi$ is convex, $\lim_{|x|\to\infty}\phi(x)=\infty$, $\phi(0)=0$, $\phi(-x)=\phi(x)$, $\phi$ is bounded in some  neighborhood of $0$ and $\phi$ is lower semicontinuous.

Associated to $\phi$ we have the \emph{complementary function} $\phi^{\star}$ which is defined at $y\in X$ as
\begin{equation}\label{eq:conjugada}
 \phi^{\star}(\xi)=\sup\limits_{x\in X} \langle \xi, x\rangle-\phi(x),
\end{equation}
where $ \langle \cdot, \cdot\rangle : X^\star\times X\to\mathbb{R}$ is the canonical pairing between $X$ and its dual space.

\begin{definition}\label{def:n-function}
A $G$-function $\phi:X \to [0, \infty)$ is called an \emph{N-function}  if $\phi(x) = 0$ if and only if $x = 0$, and it satisfies
\[
\lim_{\|x\|_X \to 0} \frac{\phi(x)}{\|x\|_X} = 0 \text{ and } \lim_{\|x\|_X \to \infty} \frac{\phi(x)}{\|x\|_X} = \infty.
\]
\end{definition}

We say that a $G$-function $\phi:X\rightarrow [0,+\infty)$ satisfies the  \emph{$\Delta_2$-condition} and we denote  $\phi \in \Delta_2$,
if there exists a  constant $C>0$  such that
\begin{equation}\label{delta2defi}\phi(2x)\leq C \phi(x)+1,\quad x\in X.
\end{equation}
Note that this definition is equivalent to the classic one, 
i.e. there exist $r_0, C > 0$  with $\phi(2x) \leq C\phi(x)$ for $\|x\| > r_0$.  When the inequality $\phi(2x)\leq C \phi(x)$ is satisfied for every $x$, we will say that $\phi$  satisfies the \emph{$\Delta_2$-condition globally}.  We denote this fact by $\phi\in \Delta_2^G$.

As in  \cite{rao1991theory}, we write $\phi_1\llcurly\phi_2$ if for every $k>0$ there exists  $C>0$ such that
\begin{equation}\label{eq:orden} \phi_1(x)\leq C+\phi_2(kx),\quad x\in X.\end{equation}

\begin{remark} Again, this definition is equivalent to say that for every $k>0$ there exists $R>0$ such that $\phi_1(x)\leq\phi_2(kx)$ for every $\|x\|_X>R$. 
\end{remark}

In this article, unless otherwise specified, the measurability of functions or sets will be understood with respect to the Lebesgue $\sigma$-algebra on the interval $[0, T]$. We say that $u:[0,T]\to X$ is \emph{Bochner measurable} if $u$ is a.e. limit  of a sequence of simple functions.  It is easy to see that if $u$ is Bochner measurable and $\phi$ is   a $G$-function then $\phi(u)$ is measurable. We define the \emph{modular function} by
\[\rho_{\phi}(u):= \int_0^T \phi(u)\, dt.\]

The \emph{Orlicz space} $\lphi=L^{\phi}\left([0,T],X\right)$ is given by
\begin{equation}\label{espacioOrlicz}
\lphi:=\big\{ u\big| u \text{ Bochner measurable and } \exists \lambda>0: \rho_{\phi}(\lambda u) < \infty   \big\}.
\end{equation}

  The Orlicz space $\lphi$ equipped with the \emph{Luxemburg norm}
\[
\|  u  \orlnor:=\inf \bigg\{ \lambda\bigg| \rho_{\phi}\bigg(\frac{v}{\lambda}\bigg) \,dt\leq 1\bigg\},
\]
is a Banach space.

We define the \emph{Orlicz-Sobolev space} $\wphi=\wphi\left([0,T],X\right)$ by
\[\wphi:=\big\{u| u \text{ is absolutely continuous, }  u'\in \lphi\big\}.\]
See \cite{barbu1976nonlinear} for the definition and properties of $X$-valued  absolutely continuous functions.
The space $\wphi$ is a Banach space when it is equipped with the norm
\begin{equation}\label{def-norma-orlicz-sob}
\|  u  \|_{\wphi}= \|  u  \|_{\lphi} + \|u'\orlnor.
\end{equation}
The subspace $\wphit$ of $\wphi$ is defined by
\[\wphit:=\wphi\cap \big\{u| u(0)=u(T)\big\}.\]
Note that $\wphit$ is a closed subspace of $\wphi$.

Let $f:X\to \rr$ a locally Lipschitz function and   $x, v \in X$,  following \cite{clarke1990optimization, clarke2013functional}, we define the \emph{generalized directional derivative} of $f$  at $x$ in the direction $v$  as

$$
f^0(x ; v)=\limsup _{y \rightarrow x, \lambda \searrow 0} \frac{f(y+\lambda v)-f(y)}{\lambda}
$$
and we denote by

$$
\partial f(x)=\left\{\xi \in X^\star: f^0(x ; v) \geqslant\left\langle \xi, v\right\rangle, \text { for all } v \in X\right\}
$$
the \emph{Clarke generalized gradient} of $f$ at $x$.

We recall that $f$ is called \emph{regular} when $f^0(x;v)$ agree with the usual directional derivative (see \cite{clarke1990optimization,clarke2013functional}).

Our main results consist of three existence theorems for solutions, each of which is based on different hypotheses on the function $F$; however, the following assumptions are common to all three theorems:

\begin{enumerate}
    \labitem{H1}{hip:H1} $\phi:\mathbb{R}^n \to [0,+\infty)$ is a strictly convex $G$-function such that both $\phi$ and its conjugate $\phi^\star$ satisfy the $\Delta_2$-condition.
    \labitem{H2}{hip:H2} For a.e.\ $t \in [0,T]$, the function $F$ is regular and locally Lipschitz with respect to $x$.
    \labitem{H3}{condacotación} There exist a function $b \in L^1([0,T];[0,\infty))$ and a function $c:\mathbb{R}^n \rightarrow \mathbb{R}$, with $a$ bounded on bounded subsets, such that for every $x \in \mathbb{R}^n$, for a.e. $t \in [0,T]$ and for every  $\xi_t \in \partial F(t,x)$,
        \begin{equation}\label{eq:condacotación}
            |F(t,x)| + |\xi_t| \leq c(x)b(t).
        \end{equation}
\end{enumerate}

 \begin{teo}\label{coercotasubgrad} Suppose that \ref{hip:H1}--\ref{condacotación}  hold, along with the following assumptions:
\begin{enumerate}

    \labitem{H4}{condB} There exist a function $d \in L^1([0,T],\mathbb{R})$ and a $G$-function $\phi_0 \llcurly \phi$ such that for every $\xi_t \in \partial F(t,x)$,
    \begin{equation}\label{eq:condition_grad}
        \phi^\star\left(\frac{\xi_t}{d(t)}\right) \leq \phi_0(x) + 1.
    \end{equation}
    
    \labitem{H5}{condcoermedia1} \textit{Coercivity condition:}
    \begin{equation}\label{eq:I->infinito}
        \lim_{|x| \to \infty} \frac{1}{\phi_0(2x)} \int_0^T F(t,x)\,dt = \infty.
    \end{equation}
\end{enumerate}
Then the functional $\mathcal{I}$ admits a minimizer $u$, which solves problem~\eqref{eq_1}.
\end{teo}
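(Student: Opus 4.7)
The plan is to apply the direct method of the calculus of variations in the Banach space $W^{1,\phi}_T$, which is reflexive under \ref{hip:H1} because both $\phi$ and $\phi^\star$ satisfy the $\Delta_2$-condition. I would establish that $\mathcal{I}$ is coercive and sequentially weakly lower semicontinuous, extract a minimizer $u$, and then derive the differential inclusion in \eqref{eq_1} from the first-order variational condition combined with Clarke's subdifferential calculus.

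For the coercivity, I would split each $u\in W^{1,\phi}_T$ as $u=\bar{u}+\tilde{u}$, where $\bar u$ is the mean of $u$ and $\int_0^T\tilde u\,dt=0$. Lebourg's mean value theorem applied to $x\mapsto F(t,x)$ produces, for a.e.\ $t$, a point $x_t$ on the segment $[\bar u,u(t)]$ and some $\xi_t\in\partial F(t,x_t)$ with $F(t,u(t))-F(t,\bar u)=\langle\xi_t,\tilde u(t)\rangle$. Young's inequality and \ref{condB} then give
\[
\langle \xi_t,\tilde u(t)\rangle\leq \phi^\star\!\left(\tfrac{\xi_t}{d(t)}\right)+\phi(d(t)\tilde u(t))\leq \phi_0(x_t)+1+\phi(d(t)\tilde u(t)),
\]
while convexity and evenness of $\phi_0$ yield $\phi_0(x_t)\leq\tfrac12\phi_0(2\bar u)+\tfrac12\phi_0(2\tilde u(t))$. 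Integrating over $[0,T]$, hypothesis \ref{condcoermedia1} absorbs the $\bar u$-contribution, and the relation $\phi_0\llcurly\phi$ together with a Poincaré–Wirtinger-type inequality for zero-mean functions in the Orlicz–Sobolev setting permits $\int_0^T\phi_0(2\tilde u)\,dt$ and $\int_0^T\phi(d(t)\tilde u)\,dt$ to be absorbed into $\int_0^T\phi(\dot u)\,dt$ up to constants, forcing $\mathcal I(u)\to\infty$ as $\|u\|_{W^{1,\phi}_T}\to\infty$.

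Weak lower semicontinuity of $u\mapsto\int_0^T\phi(\dot u)\,dt$ is immediate from convexity. For $u\mapsto\int_0^T F(t,u)\,dt$ I would use the compact embedding $W^{1,\phi}_T\hookrightarrow C([0,T],\mathbb R^n)$, which turns any weakly convergent sequence into a uniformly convergent one; the pointwise bound \ref{condacotación} and dominated convergence then even yield continuity. A minimizer $u$ therefore exists. To see that $u$ solves \eqref{eq_1}, take arbitrary $v\in W^{1,\phi}_T$, use $\mathcal I(u+\varepsilon v)\geq \mathcal I(u)$ and pass to the limit $\varepsilon\to 0^+$; the regularity of $F$ in \ref{hip:H2} identifies the directional upper limit with $F^{0}(t,u;v)=\max_{\xi\in\partial F(t,u)}\langle\xi,v\rangle$, giving
\[
\int_0^T\langle\nabla\phi(\dot u),\dot v\rangle\,dt+\int_0^T F^{0}(t,u;v)\,dt\geq 0.
\]
A standard measurable selection argument then produces $\xi_t\in\partial F(t,u(t))$ with $\tfrac{d}{dt}\nabla\phi(\dot u)=\xi_t$ a.e.\ in the distributional sense, and the natural boundary condition $\dot u(0)=\dot u(T)$ is recovered from the freedom to choose $v$ among all periodic test functions in $W^{1,\phi}_T$.

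The main obstacle lies in the coercivity step: orchestrating \ref{condB}, \ref{condcoermedia1}, the Orlicz Poincaré–Wirtinger inequality, and the dominance $\phi_0\llcurly\phi$ so that the $\tilde u$-dependent error terms are strictly controlled by the kinetic contribution $\int_0^T\phi(\dot u)\,dt$. The subsequent Euler–Lagrange derivation, though technical, is essentially routine once regularity of $F$ and the calculus of the Clarke generalized gradient are available.
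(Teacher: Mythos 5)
Your overall architecture matches the paper's: decomposition $u=\bar u+\tilde u$, a mean-value representation of $F(t,u)-F(t,\bar u)$ via Clarke gradients, Fenchel's inequality with \ref{condB}, the Orlicz Poincar\'e--Wirtinger inequality, the direct method, and Clarke calculus plus a fundamental lemma for the Euler--Lagrange inclusion. However, the coercivity step --- which you yourself flag as ``the main obstacle'' --- is precisely where your argument has a genuine gap, in two respects. First, your form of Fenchel's inequality,
\begin{equation}
\langle \xi_t,\tilde u(t)\rangle\leq \phi^\star\!\left(\tfrac{\xi_t}{d(t)}\right)+\phi\big(d(t)\tilde u(t)\big),
\end{equation}
puts the weight $d(t)$ \emph{inside} $\phi$. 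Since $d$ is only assumed to be in $L^1$, the term $\int_0^T\phi(d(t)\tilde u(t))\,dt$ need not be finite, let alone absorbable into $\int_0^T\phi(\dot u)\,dt$ (take $\phi(x)=|x|^2/2$ and $d(t)=t^{-1/2}$). The paper instead writes $\langle\xi,\tilde u\rangle=\lambda d(t)\langle \xi/d(t),\tilde u/\lambda\rangle\leq \lambda d(t)\big[\phi^\star(\xi/d(t))+\phi(\tilde u/\lambda)\big]$, keeping $d(t)$ as a multiplicative $L^1$ factor; Lemma \ref{lem:inclusion orliczII} then bounds $\phi(\tilde u(t)/\lambda)$ by the $t$-independent constant $\tfrac1T\int_0^T\phi(T u'/\lambda)$, so the $d$-integral contributes only $\|d\|_{L^1}$. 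Second, even with the correct split, the resulting term $\lambda C_1\int_0^T\phi(Tu'/\lambda)\,dt$ cannot be absorbed into $\int_0^T\phi(\dot u)\,dt$ ``up to constants'': one needs the coefficient to come out strictly less than $1$, and this is exactly where the hypothesis $\phi^\star\in\Delta_2$ from \ref{hip:H1} is used, via the $\nabla_2$-condition \eqref{eq:nabla2} of Proposition \ref{propos:Gfuncion_elementales}, to choose $r$ small and then $\lambda=lT$ so that $\lambda C_1 T^{-1}\phi(Ty/\lambda)\leq\tfrac12\phi(y)+C$. Your sketch never invokes $\phi^\star\in\Delta_2$ at this point, and without it the ``kinetic'' term can be cancelled entirely rather than merely halved.

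Two smaller points. For lower semicontinuity your route (uniform convergence of a subsequence plus dominated convergence for the $F$-term, convexity for the $\phi$-term) is acceptable and slightly more elementary than the paper's appeal to Buttazzo's theorem, provided you justify weak convergence of $u_n'$ in $L^\phi$ via reflexivity. For the Euler--Lagrange step, the inequality $\int_0^T\langle\nabla\phi(\dot u),\dot v\rangle\,dt+\int_0^T F^0(t,u;v)\,dt\geq 0$ holding for each $v$ separately does not by itself yield a \emph{single} measurable selection $\xi_t\in\partial F(t,u(t))$ valid for all $v$; this is the content of the paper's Theorem \ref{teodiff} (via Ioffe's subdifferential results). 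Finally, the boundary condition $\dot u(0)=\dot u(T)$ is not ``recovered from the freedom to choose $v$'' alone: the du Bois-Reymond lemma gives $\nabla\phi(\dot u(0))=\nabla\phi(\dot u(T))$, and one must invoke the strict convexity of $\phi$ (hence injectivity of $\nabla\phi$) to conclude, as in Proposition \ref{teosolucion}.
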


We also establish the existence of solutions under a different set of assumptions:

\begin{teo}\label{coercuasi}Let \ref{hip:H1}--\ref{condacotación} be satisfied, and consider the following additional conditions:
\begin{enumerate}
    \labitem{H6}{cuasisub} \textit{Quasi-subadditivity:} There exist constants $\lambda, \mu > 0$ such that
    \[
    F(t, \lambda(x+y)) \leq \mu \big(F(t,x) + F(t,y)\big).
    \]
    \labitem{H7}{phiceromenor} There exist a $G$-function $\phi_0 \llcurly \phi$ and a function $b \in L^1([0,T],\mathbb{R})$ such that
    \[
    F(t,x) \leq b(t)\big(\phi_0(x) + 1\big).
    \]
    \labitem{H8}{coerc2} \textit{Coercivity condition:}
    \begin{equation}\label{condcoermedia}
    \lim_{|x| \to \infty} \int_0^T F(t,x)\,dt = \infty.
    \end{equation}
\end{enumerate}
Then the functional $\mathcal{I}$ admits a minimizer $u$, which solves problem \eqref{eq_1}.
\end{teo}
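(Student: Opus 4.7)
The plan is to apply the direct method of the calculus of variations, in close parallel with the proof of Theorem~\ref{coercotasubgrad}: first I would establish weak lower semicontinuity of $\mathcal{I}$ on $W^{1,\phi}_T$, then coercivity (yielding existence of a minimizer), and finally identify the Euler--Lagrange inclusion via the Clarke subdifferential calculus. For the weak lsc step, the kinetic term $u \mapsto \int_0^T \phi(u')\,dt$ is convex and strongly continuous, hence weakly lsc. For the potential term, I would exploit the compact embedding $W^{1,\phi}_T \hookrightarrow C([0,T],\mathbb{R}^n)$, which holds in dimension one for any $G$-function $\phi$, so that weak convergence $u_k \rightharpoonup u$ in $W^{1,\phi}_T$ forces uniform convergence $u_k \to u$; combined with the pointwise domination provided by \ref{condacotación} (whose right-hand side $c(u_k(t))b(t)$ is uniformly integrable along a uniformly bounded sequence), dominated convergence yields $\int F(t,u_k)\,dt \to \int F(t,u)\,dt$.

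For coercivity, given $u_k \in W^{1,\phi}_T$ with $\|u_k\|_{W^{1,\phi}_T} \to \infty$, I would decompose $u_k = \bar{u}_k + \tilde{u}_k$ into mean and zero-mean parts. Applying \ref{cuasisub} to the decomposition $\bar{u}_k = \lambda(u_k/\lambda + (-\tilde{u}_k)/\lambda)$ gives the pointwise inequality
\begin{equation*}
F(t,\bar{u}_k) \le \mu F(t,u_k/\lambda) + \mu F(t,-\tilde{u}_k/\lambda),
\end{equation*}
while \ref{phiceromenor} bounds $F(t,-\tilde{u}_k/\lambda) \le b(t)(\phi_0(\tilde{u}_k/\lambda)+1)$, and a Sobolev--Orlicz Poincar\'e-type inequality combined with $\phi_0 \llcurly \phi$ controls $\int_0^T \phi_0(\tilde{u}_k/\lambda)\,dt$ by $\int_0^T \phi(u_k')\,dt$ with arbitrarily small multiplicative constant plus an additive one. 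Together with the $\Delta_2$-type relation $F(t,u_k) \le \mu F(t,u_k/\lambda) + \mu F(t,0)$ obtained from \ref{cuasisub} with $y=0$, these estimates can be assembled into a coercivity inequality of the form
\begin{equation*}
\mathcal{I}(u_k) \ge (1-\delta)\int_0^T \phi(u_k')\,dt + c\int_0^T F(t,\bar{u}_k)\,dt - C'
\end{equation*}
for suitable $c>0$ and $\delta>0$ arbitrarily small. The two sub-cases $|\bar{u}_k|\to\infty$ (where \ref{coerc2} forces $\int F(t,\bar{u}_k)\to\infty$) and $\|u_k'\|_{L^\phi}\to\infty$ (where the $\Delta_2$ property of $\phi$ from \ref{hip:H1} forces $\int \phi(u_k')\to\infty$) then both yield $\mathcal{I}(u_k)\to\infty$.

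Finally, the resulting minimizer $u$ lies in the closed subspace $W^{1,\phi}_T$, so $0 \in \partial\mathcal{I}(u)$ in the Clarke sense. The kinetic functional is convex and G\^ateaux differentiable with derivative $v \mapsto \int_0^T \langle \nabla\phi(u'),v'\rangle\,dt$, while the potential functional is regular by \ref{hip:H2} and admits the subdifferential interchange $\partial\bigl(\int F(t,\cdot)\,dt\bigr) \subseteq \int \partial F(t,\cdot)\,dt$ thanks to the domination \ref{condacotación}. Testing against arbitrary $v \in W^{1,\phi}_T$ and integrating by parts gives $\tfrac{d}{dt}\nabla\phi(u'(t)) \in \partial F(t,u(t))$ a.e.\ on $(0,T)$, together with the natural boundary condition $\nabla\phi(u'(0)) = \nabla\phi(u'(T))$, equivalent by strict convexity of $\phi$ to $u'(0)=u'(T)$. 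The main obstacle is the coercivity step: passing from the quasi-subadditivity estimate, in which $F$ appears at the rescaled argument $u_k/\lambda$, to a workable lower bound on $\int_0^T F(t,u_k)\,dt$, while absorbing the $\tilde{u}_k$-contributions into the kinetic energy without cancelling its dominant role.
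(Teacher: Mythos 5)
Your overall architecture (lower semicontinuity, coercivity, identification of the Euler--Lagrange inclusion) matches the paper's, and the lsc and critical-point steps are fine in outline. The genuine gap is exactly where you flag it: the coercivity estimate. You apply \ref{cuasisub} to the decomposition $\bar{u}_k=\lambda\bigl(u_k/\lambda+(-\tilde{u}_k)/\lambda\bigr)$, which yields
\begin{equation*}
F(t,\bar{u}_k)\le \mu F(t,u_k/\lambda)+\mu F(t,-\tilde{u}_k/\lambda),
\end{equation*}
that is, a lower bound on $F(t,u_k/\lambda)$ --- not on $F(t,u_k)$, which is what actually appears in $\mathcal{I}(u_k)$. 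Your proposed repair, the relation $F(t,u_k)\le \mu F(t,u_k/\lambda)+\mu F(t,0)$, points the wrong way: it is an \emph{upper} bound on $F(t,u_k)$ in terms of $F(t,u_k/\lambda)$, and an upper bound on $A$ by $B$ cannot be combined with a lower bound on $B$ to produce a lower bound on $A$. As written, the coercivity inequality you assert does not follow from the estimates you have assembled.

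The fix is to put the rescaling on the mean rather than on $u_k$: apply \ref{cuasisub} with $x=u$ and $y=-\tilde{u}$, so that $\lambda(x+y)=\lambda\bar{u}$ and
\begin{equation*}
F(t,u)\ \ge\ \frac{1}{\mu}F(t,\lambda\bar{u})-F(t,-\tilde{u}).
\end{equation*}
This is the inequality the paper uses. The term $F(t,-\tilde{u})$ is then absorbed exactly as you describe: by \ref{phiceromenor} and $\phi_0\llcurly\phi$ with scaling constant $k=1/(Tl)$ for $l$ large, together with the Wirtinger-type Lemma \ref{lem:inclusion orliczII}, one gets $\int_0^T F(t,-\tilde{u})\,dt\le C+R\int_0^T\phi(u')\,dt$ with $R=\frac{1}{lT}\int_0^T b\,dt<1$. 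The rescaling $\lambda$ sitting on $\bar{u}$ is harmless, because the coercivity hypothesis \ref{coerc2} gives $\int_0^T F(t,\lambda\bar{u}_k)\,dt\to\infty$ whenever $|\bar{u}_k|\to\infty$. With this single change your argument closes; the remaining steps (the dichotomy $|\bar{u}_k|\to\infty$ versus $\|u_k'\|_{L^\phi}\to\infty$, lower semicontinuity along a uniformly convergent minimizing subsequence, and Proposition \ref{teosolucion} for the inclusion and the boundary condition, noting that $\Pi(\phi,\Lambda)=L^\phi$ since $\phi\in\Delta_2$) proceed as in the proof of Theorem \ref{coercotasubgrad}.
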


Before formulating our third theorem, let us introduce the following definition.

\begin{definition}\label{def:bo}
We say that the continuous function  $f:\mathbb{R}^n \to \mathbb{R}$ belongs to the space  $\mathcal{BO}$ if it satisfies that
\[[f]_{\mathcal{BO}}:=\sup_{x, y \in\mathbb{R}^n}\frac{|f(x)-f(y)|}{1+|x-y|}<\infty.\]
It is immediate see that $[\cdot]_{\mathcal{BO}}$ is a semi-norm over  $\mathcal{BO}$.
\end{definition}

\begin{teo}\label{coerBO}Assume that conditions \ref{hip:H1}–\ref{condacotación} and \ref{coerc2} hold, that $\phi$ is an $N$‑function, and that the following additional assumptions are satisfied:

\begin{enumerate}
     \labitem{H9}{it:BO} \label{inec38} There exists $ b \in L^1([0,T], \mathbb{R})$ such that
    $[F]_{\mathcal{BO}}\leq b(t)$,  where the seminorm is taken over $x$ for each fixed $t$.
\end{enumerate}
Then  the functional $\mathcal{I}$ has a minimum $u$, which solves problem \eqref{eq_1}.

\end{teo}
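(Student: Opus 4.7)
The plan is to apply the direct method of the calculus of variations. By \ref{hip:H1}, the space $\wphit$ is reflexive, so existence of a minimizer reduces to two standard properties of $\mathcal{I}$: sequential weak lower semicontinuity and coercivity. Any such minimizer then satisfies \eqref{eq_1} by the Clarke critical-point / Euler--Lagrange argument already developed in the paper for Theorems \ref{coercotasubgrad} and \ref{coercuasi}, so no fresh input is needed for that step.

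Weak lower semicontinuity is handled in two parts. The kinetic term $u\mapsto \int_0^T\phi(\dot u)\,dt$ is convex and strongly continuous on $\wphit$, hence weakly lower semicontinuous. For the potential $u\mapsto\int_0^T F(t,u)\,dt$ I would exploit the compact embedding $\wphit\hookrightarrow C([0,T],\mathbb{R}^n)$, which holds because a bounded set in $\lphi$ is equi-integrable in $L^1$ (since $\phi$ is an $N$-function); combined with uniform boundedness this gives equicontinuity, and Ascoli--Arzel\`a yields uniform convergence $u_k\to u$ for any weakly convergent sequence $u_k\rightharpoonup u$. Then $F(t,u_k(t))\to F(t,u(t))$ pointwise, the $L^1$-bound $|F(t,u_k(t))|\leq c(u_k(t))b(t)\leq Cb(t)$ from \ref{condacotación} is uniform in $k$ (since $c$ is bounded on the bounded range of $\{u_k\}$), and dominated convergence gives $\int_0^T F(t,u_k)\,dt\to \int_0^T F(t,u)\,dt$.

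The main obstacle is coercivity, because \ref{it:BO} only prevents $F$ from oscillating too sharply and allows it to grow linearly in $x$. I would split $u\in\wphit$ as $u=\bar u+\tilde u$ with $\bar u=T^{-1}\int_0^T u\,dt$; since $\tilde u$ is continuous with mean zero it vanishes at some point, and the fundamental theorem of calculus yields the Wirtinger-type bound $\|\tilde u\|_{L^\infty}\leq\|\dot u\|_{L^1}$. Applying \ref{it:BO} with fixed $\bar u$ and integrating gives
\[
\Bigl|\int_0^T F(t,u)\,dt-\int_0^T F(t,\bar u)\,dt\Bigr|\leq \int_0^T b(t)\bigl(1+|\tilde u(t)|\bigr)\,dt\leq \|b\|_{L^1}\bigl(1+\|\dot u\|_{L^1}\bigr).
\]
The $N$-function property of $\phi$ says that for every $A>0$ there exists $B_A>0$ with $\phi(x)\geq A|x|-B_A$; choosing $A=2\|b\|_{L^1}$ lets me absorb the term $\|b\|_{L^1}\|\dot u\|_{L^1}$ into $\tfrac{1}{2}\int_0^T\phi(\dot u)\,dt$ plus a constant, producing the lower bound
\[
\mathcal{I}(u)\geq \tfrac{1}{2}\int_0^T \phi(\dot u)\,dt+\int_0^T F(t,\bar u)\,dt - C.
\]
If $\|u_k\|_{\wphit}\to\infty$, then either $|\bar u_k|\to\infty$, in which case \ref{coerc2} forces $\int_0^T F(t,\bar u_k)\,dt\to\infty$; or else $|\bar u_k|$ stays bounded while $\|\dot u_k\|_{\lphi}\to\infty$, and then the $\Delta_2$ condition in \ref{hip:H1} gives $\rho_\phi(\dot u_k)=\int_0^T\phi(\dot u_k)\,dt\to\infty$, whereas \ref{condacotación} keeps $\int_0^T F(t,\bar u_k)\,dt$ bounded. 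In either case $\mathcal{I}(u_k)\to\infty$, establishing coercivity and completing the proof.
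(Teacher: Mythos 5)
Your argument is correct, and for the decisive step --- coercivity --- it takes a genuinely different route from the paper. The paper estimates $\int_0^T F(t,u)-F(t,\bar u)\,dt$ by $\|b\|_{L^1}(1+C\|u'\orlnor)$ via the anisotropic Poincar\'e--Wirtinger inequality of Proposition \ref{prop:prop_espacios}(\ref{it:1,5}), and then kills the resulting linear term in $\|u'\orlnor$ by invoking Lemma \ref{lema5.2} with $\mu=1$, i.e.\ the Matuszewska--Orlicz index bound $\alpha_\phi>1$ coming from $\phi^\star\in\Delta_2$, to get $\rho_\phi(u')/\|u'\orlnor\to\infty$. You instead bound $\|\tilde u\|_{L^\infty}\le\|u'\|_{L^1}$ by the fundamental theorem of calculus and absorb $\|b\|_{L^1}\|u'\|_{L^1}$ into $\tfrac12\rho_\phi(u')$ using only the superlinearity $\phi(x)\ge A|x|-B_A$ of the $N$-function; this is more elementary, bypasses the index machinery entirely, and uses the $N$-function hypothesis (which is exactly what Theorem \ref{coerBO} adds over the other two theorems) rather than $\phi^\star\in\Delta_2$ at this point. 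The remaining steps are equivalent in substance: your split of lower semicontinuity into a convex kinetic part plus a potential part that converges by uniform convergence and dominated convergence replaces the paper's appeal to the Buttazzo lower-semicontinuity theorem, and both rest on the compact embedding already recorded in Proposition \ref{prop:prop_espacios}(\ref{it:5})--(\ref{it:6}); the passage from minimizer to solution of \eqref{eq_1} is Proposition \ref{teosolucion} in both cases (note only that $\Pi(\phi,\Lambda)=L^\phi$ under $\Delta_2$, so its hypothesis $u_0'\in\Pi(\phi,\Lambda)$ is automatic). The only cosmetic caveat is that for vector-valued $\tilde u$ one should justify $\|\tilde u\|_{L^\infty}\le\|u'\|_{L^1}$ via $\tilde u(t)=\frac1T\int_0^T(u(t)-u(s))\,ds$ rather than a single common zero of all components, but this changes nothing.
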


\section{Preliminaries}\label{Preliminares}

We would like to point out that throughout this article, the letter $C$ will be used to denote constants, and its value may vary from line to line, even within a chain of inequalities.

Given a Banach space $X$ we denote by $\langle \cdot,\cdot\rangle$ the usual bilinear quadratic form defined on $ X^\star\times X$.

Let us start by recalling a few simple facts about $G$-functions.  We suggest \cite{Mazzone2019,2002applications,donaldson1971orlicz,trudinger1974imbedding} as references for the results we list below.

\begin{Propos}\label{propos:Gfuncion_elementales} If $\phi:X\to [0,\infty)$ is a $G$-function then

\begin{enumerate}
 \item     $\phi(\lambda x)\leq \lambda\phi(x)$, for every  $\lambda\in[0,1],x\in X$;
 \item $\phi(\lambda_1 x)\leq\phi(\lambda_2 x)$, when  $0<|\lambda_1|\leq |\lambda_2| $,
 \item If $\phi$ is Frechet differentiable then $\phi^{\star}\left(\phi'(x) \right)\leq \langle \phi'(x),x\rangle\leq \phi(2x)$.
  \item $\langle \xi , x\rangle \leq  \phi(x)+\phi^{\star}(\xi)$ (Fenchel inequality).
\item If $\phi^{\star}\in\Delta_2$ then $\phi$ satisfies the \emph{$\nabla_2$-condition}, i.e.  for every $0<r<1$ there exist $l=l(r)>0$ and $C=C(r)>0$ such that
\begin{equation}\label{eq:nabla2}
  \phi(x)\leq \frac{r}{l}\phi(l x)+C,\quad x\in X.
\end{equation}

\end{enumerate}

\end{Propos}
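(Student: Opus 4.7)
The plan is to verify the five claims in sequence, noting that items (1)--(4) reduce to elementary convex analysis while item (5) carries the substantive content.

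For (1), I would combine $\phi(0)=0$ with convexity: writing $\lambda x=\lambda x+(1-\lambda)\cdot 0$ yields $\phi(\lambda x)\le\lambda\phi(x)+(1-\lambda)\phi(0)=\lambda\phi(x)$. For (2), I would reduce to (1) by evenness. Setting $t=\lambda_1/\lambda_2\in[-1,1]$ (for $\lambda_2\ne 0$), the identity $\phi(\lambda_1 x)=\phi(|t|\lambda_2 x)$ follows from $\phi(-y)=\phi(y)$, and then (1) together with $\phi\ge 0$ gives $\phi(|t|\lambda_2 x)\le |t|\phi(\lambda_2 x)\le\phi(\lambda_2 x)$.

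For (3), the upper bound follows from the subgradient inequality of convex functions applied at $x$: $\phi(2x)\ge\phi(x)+\langle\phi'(x),x\rangle$, so $\langle\phi'(x),x\rangle\le\phi(2x)-\phi(x)\le\phi(2x)$. For the lower bound, since $\phi'(x)\in\partial\phi(x)$ for differentiable convex $\phi$, the supremum in $\phi^\star(\phi'(x))=\sup_y\{\langle\phi'(x),y\rangle-\phi(y)\}$ is attained at $y=x$, giving $\phi^\star(\phi'(x))=\langle\phi'(x),x\rangle-\phi(x)\le\langle\phi'(x),x\rangle$. Item (4) is just the definition \eqref{eq:conjugada} rearranged: $\phi^\star(\xi)\ge\langle\xi,x\rangle-\phi(x)$ for every $x$.

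The substantive step is (5). Since $\phi$ is proper, convex, and lower semicontinuous, biconjugation gives $\phi=\phi^{\star\star}$. Iterating the $\Delta_2$ hypothesis $\phi^\star(2\eta)\le K\phi^\star(\eta)+1$ yields $\phi^\star(2^n\eta)\le K^n\phi^\star(\eta)+(K^n-1)/(K-1)$. Inserting this estimate into $\phi(x)=\sup_\xi\{\langle\xi,x\rangle-\phi^\star(\xi)\}$ by writing $\xi=2^n\eta$ and then rescaling the dummy argument, one arrives at a bound of the form $\phi(y)\le(1/K^n)\phi(ly)+D_n$ with $l=K^n/2^n$ and $D_n$ bounded. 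Given $r\in(0,1)$, choosing $n$ so that $1/2^n\le r$ makes $1/K^n\le r/l$, converting this into the $\nabla_2$ estimate $\phi(y)\le(r/l)\phi(ly)+C(r)$. The main obstacle is keeping the arithmetic of the iterated constants clean so that every $r\in(0,1)$ is captured and that $l$ remains bounded below away from $0$ (which holds because convexity of $\phi^\star$ forces $K\ge 2$ on the range where $\phi^\star$ is large). To streamline the exposition I would alternatively cite the classical equivalence $\phi^\star\in\Delta_2\Leftrightarrow\phi\in\nabla_2$ from \cite{rao1991theory} or the vector-valued analogue developed in \cite{Mazzone2019}.
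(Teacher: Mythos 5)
Your proof is correct. Note that the paper does not actually prove Proposition \ref{propos:Gfuncion_elementales}: it is presented as a list of recalled facts with pointers to \cite{Mazzone2019,2002applications,donaldson1971orlicz,trudinger1974imbedding}, so there is no in-text argument to compare yours against, only the standard ones, which you reproduce accurately. Items (1)--(4) are exactly the expected convexity/Fenchel--Young manipulations (in (3), the attainment of the supremum at $y=x$ is precisely the gradient inequality, and the nonnegativity $\phi(x)\ge 0$ then gives both stated bounds). For item (5), the only substantive one, your biconjugation argument goes through: iterating $\phi^\star(2\eta)\le K\phi^\star(\eta)+1$ gives $\phi^\star(2^n\eta)\le K^n\phi^\star(\eta)+(K^n-1)/(K-1)$, and substituting $\xi=2^n\eta$ in $\phi(x)=\phi^{\star\star}(x)=\sup_\xi\{\langle\xi,x\rangle-\phi^\star(\xi)\}$ yields $\phi(y)\le K^{-n}\phi(ly)+K^{-n}(K^n-1)/(K-1)$ with $l=K^n/2^n$, after which choosing $n$ with $2^{-n}\le r$ gives $K^{-n}\le r/l$. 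The two points that need care are handled or at least flagged by you: the lower semicontinuity built into the definition of a $G$-function is exactly what licenses $\phi=\phi^{\star\star}$ via Fenchel--Moreau, and $K\ge 2$ --- hence $l\ge 1$ and a uniformly bounded additive constant $K^{-n}(K^n-1)/(K-1)\le 1/(K-1)$ --- follows from $\phi^\star(2\eta)\ge 2\phi^\star(\eta)$ (convexity plus $\phi^\star(0)=0$) together with the unboundedness of $\phi^\star$, which holds because $\phi$ is finite-valued. Simply citing the classical equivalence $\phi^\star\in\Delta_2\Leftrightarrow\phi\in\nabla_2$ from \cite{rao1991theory}, as you suggest, would also be entirely consistent with how the paper itself treats this proposition.
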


\begin{remark}
 It is easy to see that \eqref{eq:nabla2}  is equivalent  to  assume that there exists $R>0$ such that   \eqref{eq:nabla2} holds  with $C=0$,  $r=1/2$ and for $|x|>R$.
\end{remark}
In what follows we give some properties of   Orlicz and Orlicz-Sobolev spaces.

 \begin{Propos}[\emph{H\"older's inequality}] If $u\in\lphi([0,T],X)$ and $v\in\lpsi([0,T],X^\star)$ then $\langle  v,u \rangle   \in L^1$ and
\begin{equation}\label{holder}
\int_0^T \langle v,u \rangle   dt\leq 2 \|u\orlnor\|v\|_{L^{\phi^{\star}}}.
\end{equation}
 \end{Propos}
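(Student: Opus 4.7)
The strategy is the standard Fenchel--Young argument combined with Luxemburg-norm normalization, extended here to the vector-valued setting.

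First, I would verify that $t\mapsto\langle v(t),u(t)\rangle$ is measurable. Since $u$ and $v$ are Bochner measurable, they are a.e.\ pointwise limits of simple functions $u_n:[0,T]\to X$ and $v_n:[0,T]\to X^{\star}$; the pairing of two simple functions is a scalar simple function, and continuity of the duality bracket lets us pass to the a.e.\ limit to conclude that $\langle v,u\rangle$ is measurable.

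Next, applying Proposition~\ref{propos:Gfuncion_elementales}(4) pointwise, together with the evenness $\phi(-x)=\phi(x)$ enjoyed by every $G$-function (so that replacing $x$ with $-x$ in Fenchel absorbs the sign), I would obtain
$$|\langle v(t),u(t)\rangle|\leq \phi(u(t))+\phi^{\star}(v(t)) \quad\text{for a.e. }t\in[0,T].$$
This already places $\langle v,u\rangle$ in $L^1$, since $u\in L^\phi$ and $v\in L^{\phi^\star}$ make some positive scalar multiples of $\phi(u)$ and $\phi^\star(v)$ integrable, and $\phi,\phi^\star$ are finite-valued.

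To obtain the sharp constant $2$, I would normalize. If either Luxemburg norm vanishes the inequality is trivial, so assume $\alpha:=\|u\|_{L^{\phi}}>0$ and $\beta:=\|v\|_{L^{\phi^{\star}}}>0$. A monotone-convergence argument applied to $\lambda\mapsto\rho_\phi(u/\lambda)$ as $\lambda\searrow\alpha$ yields $\rho_\phi(u/\alpha)\leq 1$, and analogously $\rho_{\phi^\star}(v/\beta)\leq 1$ (this is where lower semicontinuity of $\phi$ enters, via Fatou). Applying the pointwise Fenchel bound to $u/\alpha$ and $v/\beta$ and integrating gives
$$\frac{1}{\alpha\beta}\int_0^T|\langle v,u\rangle|\,dt\leq \rho_\phi\!\left(\frac{u}{\alpha}\right)+\rho_{\phi^{\star}}\!\left(\frac{v}{\beta}\right)\leq 2,$$
and multiplying by $\alpha\beta$ yields the claimed estimate. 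I do not anticipate any real obstacle; the only minor care points are handling the absolute value via the evenness of $\phi$ and confirming attainment of the infimum defining the Luxemburg norm, both of which are routine.
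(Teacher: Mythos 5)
Your proof is correct and is precisely the standard Fenchel--Young plus Luxemburg-normalization argument; the paper itself states this proposition without proof, deferring to the cited literature, where exactly this argument appears. The only point worth double-checking is the attainment step $\rho_\phi(u/\alpha)\leq 1$, which you correctly justify via monotone convergence along $\lambda_n\searrow\alpha$ together with the lower semicontinuity and monotonicity of $\phi$ from Proposition~\ref{propos:Gfuncion_elementales}(2).
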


Suppose $u\in\lphi([0,T],X)$ and consider $K:=\rho_{\phi}(u)+1\geq 1$. Then, from item 1 in Proposition \ref{propos:Gfuncion_elementales} we have $\rho_{\phi}(K^{-1}u)\leq K^{-1}\rho_{\phi}(u)\leq 1$. Therefore, we conclude
\begin{equation}\label{eq:amemiya}
 \|u\orlnor \leq \rho_{\phi}(u)+1.
\end{equation}

As is customary, we will use the decomposition $u=\overline{u}+\widetilde{u}$ for a function $u\in L^1([0,T],X)$,  where $\overline{u} =\frac1T\int_0^T u(t)\, dt$ and $\widetilde{u}=u-\overline{u}$. The last integral is taken in the Bochner sense (see \cite{papageorgiou2009handbook}). Then, one has
\[\wphit=\wphittilde\oplus X, \]
where  $X$  is identified with the set of constant functions and   \[\wphittilde=\{u \in \wphit: \overline{u}=0  \}.\]

We recall   (see Lemma 2.4 in \cite{Mazzone2019} and Theorem 4.4 in \cite{chamra2017anisotropic}) the following inequality.

\begin{lem}
\label{lem:inclusion orliczII} Let $\phi:X\to [0,+\infty)$ be a $G$-function,
and let $u\in\wphit\left([0,T],X\right)$. Then

\begin{equation}\label{eq:wirtinger}
  \phi\big(\tilde{u}(t)\big)\leq\frac{1}{T} \int_0^T \phi\big(Tu'(r)\big)\,dr.
\end{equation}

\end{lem}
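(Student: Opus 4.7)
The plan is to derive the inequality through two applications of Jensen's inequality, exploiting that $\tilde u$ has zero mean. Writing
\[\tilde u(t) = u(t) - \bar u = \frac{1}{T}\int_0^T \bigl(u(t) - u(s)\bigr)\,ds,\]
a first application of Jensen's inequality against the probability measure $ds/T$ on $[0,T]$ yields
\[\phi(\tilde u(t)) \leq \frac{1}{T}\int_0^T \phi\bigl(u(t) - u(s)\bigr)\,ds.\]
It then suffices to bound the integrand, uniformly in $s$, by $\frac{1}{T}\int_0^T \phi(Tu'(r))\,dr$, since the outer average in $s$ will trivially collapse.

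For fixed $s, t$, the fundamental theorem of calculus for $X$-valued absolutely continuous functions (see \cite{barbu1976nonlinear}) gives $u(t)-u(s) = \pm\int_0^T \chi_I(r)u'(r)\,dr$, where $I$ is the closed interval with endpoints $s$ and $t$; the sign is harmless since $\phi$ is even. The crucial rewriting is to present this Bochner integral as an average against a probability measure,
\[\int_0^T \chi_I(r)u'(r)\,dr = \int_0^T \bigl(T\chi_I(r)u'(r)\bigr)\,\frac{dr}{T},\]
and then apply Jensen's inequality a second time to obtain
\[\phi\bigl(u(t)-u(s)\bigr) \leq \frac{1}{T}\int_0^T \phi\bigl(T\chi_I(r)u'(r)\bigr)\,dr = \frac{1}{T}\int_I \phi(Tu'(r))\,dr \leq \frac{1}{T}\int_0^T \phi(Tu'(r))\,dr,\]
where the middle equality uses $\phi(0)=0$ to annihilate the integrand outside $I$. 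Inserting this bound into the first estimate concludes the proof.

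The only subtle point I anticipate is the placement of the factor $T$ inside $\phi(Tu'(r))$: it is dictated entirely by the rewriting that turns $\int_0^T \chi_I u'\,dr$ into an average, and without this renormalization Jensen's inequality would not produce the sharp constant stated in the lemma. Beyond that, the vector-valued Jensen inequality used in both steps follows from convexity and lower semicontinuity of $\phi$ combined with approximation of the Bochner integral by simple functions, so no further technical obstacle should arise.
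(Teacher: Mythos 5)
Your proof is correct. The paper itself does not prove this lemma --- it only recalls it from Lemma 2.4 of \cite{Mazzone2019} and Theorem 4.4 of \cite{chamra2017anisotropic} --- but your double application of Jensen's inequality (first to $\tilde u(t)=\frac1T\int_0^T(u(t)-u(s))\,ds$, then to $u(t)-u(s)=\pm\int_0^T\chi_I(r)u'(r)\,dr$ renormalized by $T$, using $\phi(0)=0$, $\phi\geq 0$ and the evenness of $\phi$) is exactly the standard argument given in those references, and every step is justified; note in passing that your argument does not even use the periodicity $u(0)=u(T)$, so the inequality holds on all of $\wphi$.
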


  \begin{Propos}\label{prop:prop_espacios} Let $\phi$ be a $G$-function. Then:
  \begin{enumerate}
\item\label{it:1} There exists a positive constant   $C$ such that $\|u\|_{L^1}\leq C\|u\|_{L^\phi}$, for every $u\in L^\phi([0,T],X)$ and $\|u\|_{L^\phi}\leq C\|u\|_{L^\infty}$, for every $u\in L^\infty$. Briefly $L^\infty([0,T],X) \hookrightarrow L^\phi([0,T],X)\hookrightarrow L^1([0,T],X)$. Consequently\linebreak $\wphi([0,T],X) \hookrightarrow W^1L^1([0,T],X) $.

\item\label{it:1,5} (Anisotropic Poincaré-Wirtinger Inequality) For every $u\in\lphi([0,T],X)$ we have that $\|\tilde{u}\orlnor\leq T\|u'\orlnor$.  Even more $\|\tilde{u}\|_{L^\infty}\leq C\|u'\orlnor$.

  \item\label{it:2}  $\wphi([0,T],X) \hookrightarrow  C^0([0,T],X)$.

\item\label{it:4} $\|u\sobnort':=|\overline{u}|+\|u'\orlnor$ defines an equivalent norm to $\|\cdot\sobnor$ on \linebreak $\wphit([0,T],\rr^n)$.

\item\label{it:5} Every bounded sequence $\{u_n\}$ in  $\wphi([0,T],\rr^n)$  has an uniformly convergent subsequence.

\item\label{it:6}   If  $u_n\rightharpoonup u$  (as usual $\rightharpoonup $ denotes weak convergence)   in  $\wphi([0,T],\rr^n)$ then $u_n$ converges to $u$ uniformly.
     \end{enumerate}
  \end{Propos}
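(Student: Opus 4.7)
The plan is to apply the direct method of the calculus of variations: establish that $\mathcal{I}$ is coercive and weakly lower semicontinuous on $W^{1,\phi}_T$, and then verify that any minimizer solves \eqref{eq_1} via a Clarke subdifferential computation. The weak lower semicontinuity and the Euler--Lagrange inclusion argument proceed in the same manner as in the proof of Theorem~\ref{coercotasubgrad}; the genuinely new ingredient here is the coercivity, which exploits the $\mathcal{BO}$ condition together with the $N$-function hypothesis on $\phi$.

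For coercivity, I would write $u = \bar u + \tilde u$ and decompose
\[
\mathcal{I}(u) = \int_0^T\phi(u')\,dt + \int_0^T F(t,\bar u)\,dt + \int_0^T\!\bigl[F(t,u(t)) - F(t,\bar u)\bigr]\,dt.
\]
Assumption~\ref{it:BO} gives $|F(t,u(t)) - F(t,\bar u)| \le b(t)(1+|\tilde u(t)|)$, so the last integral is bounded in absolute value by $\|b\|_{L^1}(1+\|\tilde u\|_\infty)$. A refinement of the Poincaré--Wirtinger inequality (which follows from the mean value theorem, since $\tilde u$ has a zero) yields $\|\tilde u\|_\infty \le \|u'\|_{L^1}$. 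Because $\phi$ is an $N$-function, $\phi(y)/|y|\to\infty$, so for every $K>0$ there exists $C_K$ with $|y| \le \phi(y)/K + C_K$; choosing $K = 2\|b\|_{L^1}$ and integrating gives $\|b\|_{L^1}\|u'\|_{L^1} \le \tfrac12\rho_\phi(u') + C$. Collecting terms,
\[
\mathcal{I}(u) \ge \tfrac12\rho_\phi(u') + \int_0^T F(t,\bar u)\,dt - C'.
\]
Continuity of $x\mapsto \int_0^T F(t,x)\,dt$ (from~\ref{condacotación} via dominated convergence) combined with~\ref{coerc2} shows that this integral is bounded below on $\mathbb{R}^n$ and diverges as $|\bar u|\to\infty$. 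Using the equivalent norm $|\bar u| + \|u'\|_{L^\phi}$ (Proposition~\ref{prop:prop_espacios}, item~\ref{it:4}) and the fact that $\phi\in\Delta_2$ implies $\|u'_n\|_{L^\phi}\to\infty$ iff $\rho_\phi(u'_n)\to\infty$, coercivity follows by treating separately the cases $|\bar u_n|\to\infty$ with $\|u'_n\|_{L^\phi}$ bounded, and $\|u'_n\|_{L^\phi}\to\infty$.

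For weak lower semicontinuity, if $u_n\rightharpoonup u$ in $W^{1,\phi}_T$, Proposition~\ref{prop:prop_espacios}, item~\ref{it:6}, gives $u_n\to u$ uniformly; continuity of $F(t,\cdot)$, boundedness of $c(u_n)$, and~\ref{condacotación} allow dominated convergence to conclude $\int_0^T F(t,u_n)\,dt \to \int_0^T F(t,u)\,dt$, while convexity of $\phi$ handles the kinetic term in the standard way. The direct method then furnishes a minimizer, and that the minimizer solves \eqref{eq_1} follows from the Clarke-based Euler--Lagrange computation used in Theorem~\ref{coercotasubgrad}: hypothesis~\ref{hip:H2} permits transferring the Clarke subdifferential inside the integral, while smoothness and convexity of $\phi$ together with the periodic boundary conditions produce the term $\frac{d}{dt}\nabla\phi(u'(t))$ upon integration by parts. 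The main obstacle is the coercivity estimate above: absorbing the linear-in-$|\tilde u|$ error from $\mathcal{BO}$ into $\rho_\phi(u')$ relies crucially on $\phi$ being an $N$-function rather than merely a $G$-function, which is precisely why this strengthened hypothesis appears in the theorem.
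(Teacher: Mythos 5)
Your proposal does not address the statement you were asked to prove. The statement is Proposition~\ref{prop:prop_espacios}, which collects functional-analytic properties of the spaces $L^\phi$ and $W^1L^\phi$: the embeddings $L^\infty \hookrightarrow L^\phi \hookrightarrow L^1$, the anisotropic Poincar\'e--Wirtinger inequality $\|\tilde u\|_{L^\phi}\le T\|u'\|_{L^\phi}$ and $\|\tilde u\|_{L^\infty}\le C\|u'\|_{L^\phi}$, the embedding $W^1L^\phi \hookrightarrow C^0$, the equivalence of the norm $|\overline u|+\|u'\|_{L^\phi}$ with the Orlicz--Sobolev norm on $W^1L^\phi_T$, and the two compactness statements about bounded and weakly convergent sequences. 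What you have written instead is an outline of the proof of Theorem~\ref{coerBO} (coercivity of $\mathcal{I}$ under the $\mathcal{BO}$ hypothesis, lower semicontinuity, and the Euler--Lagrange inclusion). Indeed, your argument explicitly invokes Proposition~\ref{prop:prop_espacios}, items~\ref{it:4} and~\ref{it:6}, as known tools --- so it presupposes the very statement it was supposed to establish, and nothing in your text proves any of the six items.

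For comparison, the paper's proof of the proposition is short and elementary: item~\ref{it:1,5} follows by applying inequality \eqref{eq:wirtinger} to $u/(T\|u'\|_{L^\phi})$, integrating for the $L^\phi$ bound, and using $\phi(x)\to\infty$ to extract the $L^\infty$ bound; item~\ref{it:2} combines item~\ref{it:1} with a standard embedding for $W^1L^1$; item~\ref{it:4} uses $|\overline u|\le T^{-1}\|u\|_{L^1}\le C\|u\|_{W^1L^\phi}$ in one direction and $\|\overline u\|_{L^\phi}\le C|\overline u|$ in the other; item~\ref{it:5} is Arzel\`a--Ascoli via item~\ref{it:2}; and item~\ref{it:6} follows from item~\ref{it:5} together with boundedness of weakly convergent sequences. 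You would need to supply arguments of this kind (or references) for each item; the existence-theorem machinery in your proposal is simply not relevant here.
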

\begin{proof}All the results listed above are essentially known, as can be seen in the cited literature. For completeness, we include brief proofs of those results that we have not found explicitly stated in the references.

Applying inequality \eqref{eq:wirtinger} to the function $u/T\|u'\orlnor$ we get
\begin{equation}\label{eq:norm_inf_tilde}
 \phi\left(\frac{\tilde{u}}{T\|u'\orlnor}\right)\leq \frac{1}{T}\int_0^T\phi\left(\frac{u'}{\|u'\orlnor}\right)\leq T^{-1}.
\end{equation}
Integrating \eqref{eq:norm_inf_tilde} for $t$ in $[0,T]$ we obtain the first inequality in item   \ref{it:1,5}. Since $\phi(x)\to\infty$, when $|x|\to\infty$, we can find a constant $R>0$ such that $\phi(x)>1/T$ when $|x|>R$. Therefore, from \eqref{eq:norm_inf_tilde} we obtain $\|\tilde{u}\|_{L^\infty}\leq RT \|u'\orlnor$.

 Item \ref{it:2} is a inmediate consequence of item \ref{it:1} and    \cite[Th. 2.2]{buttazzo1998one}.

 For \ref{it:4}, note that by triangle inequality for integrals and by item \ref{it:1}, we have that
 $$
 |\overline{u}|\leq T^{-1}\|u\|_1 \leq C\|u\|_{W^1L^1}\leq C \|u\sobnor,
 $$
which constitutes one of the inequalities necessary to establish the equivalence. The other one is trivially inferred from the chain $\|\overline{u}\orlnor\leq C\|\overline{u}\|_{C^0}=C|\overline{u}|$.

Item \ref{it:5} is a consequence of item \ref{it:2} and the Arzela-Ascoli Theorem.

Finally, item  \ref{it:6} follows from item \ref{it:5} and the fact that every weakly convergent sequence is bounded.
\end{proof}

The following space plays an important role in our results.

\begin{definition}\label{def:Pi} Let $\phi$ be a $G$-function. We define

\[\Pi(\phi, r):=\{u \in L^\phi \mid d(u,L^\infty)<r\}.\]

\end{definition}

It is well known that if $\phi$ is a $\Delta_2$ function then $\Pi(\phi, r)= L^\phi$.

 \section{Differentiability of action integrals}

 Let $X$ be a separable Banach space and $\mathcal{L}:[0,T]\times X\times X \rightarrow \mathbb{R}$, a function which satisfies the \textit{ Caratheodory condition}, i.e.

\begin{itemize}\label{caratherodory}
    \item $\mathcal{L}(\cdot,x,y)$ measurable on $[0,T]$ for every $(x,y)$ in $X\times X$.
    \item $\mathcal{L}(t,\cdot,\cdot)$ is continuous on $X\times X$ a.e. $t\in [0,T]$.
\end{itemize}

The Caratheodory condition implies that the map $t\to \mathcal{L}(t,u_1(t),u_2(t))$ is measurable when $u_1,u_2$ are measurable functions with  respect $t$ (see \cite[Sección 6.2]{papageorgiou2009handbook}).

In this section, we study the generalized Clarke subdifferentiability (see \cite{papageorgiou2009handbook,clarke2013functional,clarke1990optimization}) of integral functionals of the following type

\begin{equation}\label{integraldeacción}
  \mathcal{I}(u):= \int_0^T \mathcal{L}(t,u(t),u'(t))  dt,  \tag{IA}
\end{equation}
 on Orlicz–Sobolev spaces.

In what follows we assume de following hypotheses

\begin{enumerate}

 \labitem{H10}{hip1dif} $\phi:X \rightarrow [0,\infty)$ is a  $G$-function  bounded on bounded subsets of $X$.

  \labitem{H11}{hip2dif} $\mathcal{L}(t,\cdot,\cdot):X\times X \rightarrow \mathbb{R}$ is locally Lipschitz and regular for a.e. $t \in [0,T]$.

 \labitem{H12}{hip3dif}

 There exists $\lambda, \Lambda > 0$, $b \in L^1([0,T];[0,\infty))$ and $a:X \rightarrow [0,+\infty) $ bounded on bounded subsets such that for a.e. $t \in [0,T]$, if
$ (x,y) \in X\times X$ and $\xi_t=(\xi_{xt},\xi_{yt}) \in \partial_{x}\mathcal{L}(t,x,y)\times\partial_{y}\mathcal{L}(t,x,y)$ then
      \begin{equation}\label{Hip4}
          |\mathcal{L}(t,x,y)| + \|\xi_{xt}\|_{X^\star}+\phi^\star \left(\frac{\xi_{yt}}{\lambda}\right) \leq a(x) \left(\phi \left(\frac{y}{\Lambda}\right)+ b(t)\right)
     \end{equation}
\end{enumerate}

We start proving some auxiliary results. 

\begin{lem}\label{lema:cotauniforme}
Given $u_0 \in \Pi(\phi, \Lambda),$ there exists $ \delta=\delta(u_0,\Lambda)>0$ and $ M=M(u_0,\Lambda)$ such that
\[\int_0^T{\phi \left(\frac{v(t)}{\Lambda}\right) dt}< M,\] 
for every  $ v \in B(u_0, \delta)$.
\end{lem}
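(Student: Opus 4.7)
The plan is to exploit the hypothesis $u_0\in\Pi(\phi,\Lambda)$ together with convexity of $\phi$ to split $v$ into a bounded part and a small $L^\phi$ part, then bound each piece separately.

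First, I would unpack the assumption. Since $d(u_0,L^\infty)<\Lambda$, there exists $w\in L^\infty$ with $\|u_0-w\orlnor<\Lambda$. Fix $\eta>0$ small enough that $\|u_0-w\orlnor<\Lambda-\eta$, and choose $\theta\in(0,1)$ with $\theta\Lambda>\|u_0-w\orlnor+\eta/2$. Then set $\delta:=\eta/4$. For every $v\in B(u_0,\delta)$ the triangle inequality yields
\[
 \|v-w\orlnor\leq \|v-u_0\orlnor+\|u_0-w\orlnor<\delta+(\Lambda-\eta)<\theta\Lambda.
\]

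Next, I would apply convexity of $\phi$ to the decomposition $v/\Lambda=\theta\cdot\frac{v-w}{\theta\Lambda}+(1-\theta)\cdot\frac{w}{(1-\theta)\Lambda}$, giving pointwise
\[
 \phi\!\left(\frac{v(t)}{\Lambda}\right)\leq \theta\,\phi\!\left(\frac{v(t)-w(t)}{\theta\Lambda}\right)+(1-\theta)\,\phi\!\left(\frac{w(t)}{(1-\theta)\Lambda}\right).
\]
For the first term, since $\theta\Lambda>\|v-w\orlnor$, the definition of the Luxemburg norm (together with item 1 of Proposition \ref{propos:Gfuncion_elementales}, which yields monotonicity of $\alpha\mapsto\rho_\phi(u/\alpha)$) gives $\int_0^T\phi\big((v-w)/(\theta\Lambda)\big)\,dt\leq 1$. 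For the second term, $w\in L^\infty$ implies that $w(t)/((1-\theta)\Lambda)$ lies in the ball of radius $\|w\|_{L^\infty}/((1-\theta)\Lambda)$, so by \ref{hip1dif} ($\phi$ bounded on bounded sets) there is a constant $M_1=M_1(w,\theta,\Lambda)$ with $\phi(w(t)/((1-\theta)\Lambda))\leq M_1$ a.e., hence the integral is at most $TM_1$.

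Combining these two estimates gives
\[
 \int_0^T\phi\!\left(\frac{v(t)}{\Lambda}\right)dt\leq \theta+(1-\theta)TM_1=:M,
\]
which depends only on $u_0$, $w$, $\theta$ and $\Lambda$ (hence on $u_0$ and $\Lambda$), and not on $v\in B(u_0,\delta)$. This is the required uniform bound. I do not anticipate a serious obstacle here; the only subtle point is choosing $\theta$ strictly between $\|u_0-w\orlnor/\Lambda$ and $1$ and then shrinking $\delta$ so that the slack is preserved for every $v$ in the ball, which is exactly what the two-step choice of $\eta$ and $\delta$ above accomplishes.
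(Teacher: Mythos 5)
Your proof is correct and follows essentially the same route as the paper: both arguments fix an $L^\infty$ function close to $u_0$, use convexity of $\phi$ to split $v/\Lambda$ into a piece whose modular is controlled via the Luxemburg norm and an $L^\infty$ piece controlled by the boundedness of $\phi$ on bounded sets (hypothesis \ref{hip1dif}). The only cosmetic difference is that you use a two-term convex combination ($v-w$ and $w$) where the paper uses a three-term one ($v-u_0$, $u_0-v_0$, $v_0$), which slightly streamlines the bookkeeping without changing the idea.
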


\begin{proof} 
Let  $u_0 \in \Pi(\phi, \Lambda )$. We can find $v_0 \in L^\infty$ with $d(u_0, v_0) < \Lambda$. We take $ v \in B(u_0, \delta)$, where $\delta > 0$ and $\delta< \Lambda-\|u_0-v_0\|_{L^\phi}$.
We choose  $ \epsilon>0$ small enough in order to $\|u_0-v_0\|_{L^\phi} \leq \left(\Lambda-\delta-\epsilon\Lambda\right)$. Then, from the convexity of $\phi$
\begin{multline*}
  \phi \left(\frac{v(t)}{\Lambda} \right)  %\phi \left(\frac{v(t)-u_0(t)+u_0(t)-v_0(t)+v_0(t)}{\Lambda} \right) \\
%
 % = \phi \left(\frac{(v(t)-u_0(t)) \|v-u_0\|_{L^\phi}}{\Lambda \|v-u_0\|_{L^\phi}} + \frac{(u_0(t)-v_0(t))\left(1-\frac{\delta}{\Lambda}-\epsilon\right)}%%%{\Lambda\left(1-\frac{\delta}{\Lambda}-\epsilon\right)} + \frac{\epsilon v_0(t)}{\epsilon \Lambda}\right) \\
%
  \leq \frac{\delta}{\Lambda}\phi \left(\frac{v(t)-u_0(t)}{\delta} \right)\\
  +\left(1-\frac{\delta}{\Lambda}-\epsilon\right) \phi \left(\frac{u_0(t)-v_0(t)}{\Lambda-\delta-\epsilon\Lambda}\right) + \epsilon \phi \left( \frac{v_0(t)}{\epsilon\Lambda }\right).\
\end{multline*}
Integrating this inequality and using $\|v-u_0\|_{L^\phi}<\delta$, $\|u_0-v_0\|_{L^\phi} \leq \left(\Lambda-\delta-\epsilon\Lambda\right)$ and   Proposition \ref{propos:Gfuncion_elementales}(2)   we obtain

\[\int_0^T\phi \left(\frac{v(t)}{\Lambda} \right) dt\leq 1+\epsilon \int_0^T\phi \left( \frac{v_0(t)}{\epsilon\Lambda }\right)dt=:M\]

Since $\phi$ is bounded on bounded sets,  we have that $M<\infty$.   Moreover, by construction, $v_0$ is independent of $v$.
\end{proof}
The following result follows easily from the definitions.
\begin{lem}\label{lema:cotanemi}
 If $a:X\to[0,\infty)$ is a function which is bounded on bounded subsets of $X$,  then  there exists a non decreasing function  $K:[0,\infty)\to[0,\infty)$ such that $\|a(u)\|_{L^\infty}\leq K(\|u\|_{L^\infty})$
\end{lem}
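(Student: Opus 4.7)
The plan is essentially to unwind the definition: set
\[
K(r) := \sup\{\,a(x) : x \in X,\ \|x\|_X \leq r\,\}, \qquad r \geq 0,
\]
and check that this $K$ does the job. The hypothesis that $a$ is bounded on bounded subsets of $X$ is precisely the statement that the supremum above is finite for every $r \geq 0$, so $K$ takes values in $[0,\infty)$. Monotonicity is immediate: if $r_1 \leq r_2$, then the ball of radius $r_1$ is contained in the ball of radius $r_2$, hence the supremum over the smaller set is at most the supremum over the larger one.

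To verify the pointwise bound, I would take any $u \in L^\infty([0,T],X)$. By definition of the essential supremum norm, there is a Lebesgue-null set $N \subset [0,T]$ such that $\|u(t)\|_X \leq \|u\|_{L^\infty}$ for every $t \in [0,T]\setminus N$. For such $t$, the point $u(t)$ lies in the closed ball of radius $\|u\|_{L^\infty}$, so by construction of $K$,
\[
a(u(t)) \leq K(\|u\|_{L^\infty}).
\]
Taking essential supremum over $t$ yields $\|a(u)\|_{L^\infty} \leq K(\|u\|_{L^\infty})$, as claimed.

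There is no real obstacle here; as the paper itself indicates, the lemma follows immediately from the definitions. The only mild point worth mentioning is that, strictly speaking, the composition $t \mapsto a(u(t))$ need not be measurable unless one assumes something extra on $a$ (e.g., Borel measurability or continuity), but the statement is about the essential supremum bound, which holds pointwise a.e.\ and so remains valid interpreting $\|a(u)\|_{L^\infty}$ as the essential supremum of any measurable majorant, or directly using that the constant $K(\|u\|_{L^\infty})$ is an a.e.\ upper bound.
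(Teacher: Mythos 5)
Your proof is correct and is exactly the argument the paper has in mind: the paper states the lemma without proof, noting only that it ``follows easily from the definitions,'' and your construction $K(r)=\sup_{\|x\|_X\le r}a(x)$ together with the a.e.\ pointwise bound is the intended unwinding of those definitions. The aside on measurability of $t\mapsto a(u(t))$ is a fair observation, but it does not affect the validity of the stated essential-supremum bound.
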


\begin{Propos} The funcional
\[ I(u_1,u_2):= \int_0^T{\mathcal{L}(t,u_1(t),u_2(t)) dt}\] is locally Lipschitz on $L^\infty([0,T],X)\times\Pi(\phi, \Lambda)$.
\end{Propos}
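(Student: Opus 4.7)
The plan is, for arbitrary $(u_1^0,u_2^0)\in L^\infty\times\Pi(\phi,\Lambda)$, to produce $\delta>0$ and $C>0$ so that $|I(u_1,u_2)-I(v_1,v_2)|\le C(\|u_1-v_1\|_{L^\infty}+\|u_2-v_2\|_{L^\phi})$ for all $(u_1,u_2),(v_1,v_2)$ in the $\delta$-ball around $(u_1^0,u_2^0)$. I would shrink $\delta$ so that Lemma~\ref{lema:cotauniforme} applies uniformly to every $u_2$ in the ball, giving $\int_0^T\phi(u_2/\Lambda)\,dt\le M$, and so that $\|u_1\|_{L^\infty}\le \|u_1^0\|_{L^\infty}+\delta=:R$ uniformly.

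The key device is Lebourg's mean value theorem, applied pointwise in $t$ to the locally Lipschitz function $\mathcal{L}(t,\cdot,\cdot)$: for a.e.\ $t$ there exist $z(t)=(z_1(t),z_2(t))$ on the segment joining $(u_1(t),u_2(t))$ and $(v_1(t),v_2(t))$ and $\xi_t=(\xi_{xt},\xi_{yt})\in\partial\mathcal{L}(t,z(t))$ with
\[
\mathcal{L}(t,u_1,u_2)-\mathcal{L}(t,v_1,v_2)=\langle \xi_{xt},u_1-v_1\rangle+\langle \xi_{yt},u_2-v_2\rangle.
\]
Using separability of $X$ together with the closed-graph property of $\partial\mathcal{L}(t,\cdot)$, a standard measurable selection argument provides $z$ and $\xi$ that are measurable in $t$. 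Integrating and applying the triangle inequality then yields
\[
|I(u_1,u_2)-I(v_1,v_2)|\le \|u_1-v_1\|_{L^\infty}\int_0^T\|\xi_{xt}\|_{X^\star}\,dt+\int_0^T|\langle\xi_{yt},u_2-v_2\rangle|\,dt.
\]

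For the first integral I invoke \eqref{Hip4}. Lemma~\ref{lema:cotanemi} and $\|z_1(t)\|_X\le R$ yield $a(z_1(t))\le K(R)$ uniformly, while $z_2(t)$ is a convex combination of $u_2(t)$ and $v_2(t)$, so convexity of $\phi$ together with Lemma~\ref{lema:cotauniforme} gives $\int_0^T\phi(z_2/\Lambda)\,dt\le 2M$. Hence $\int_0^T\|\xi_{xt}\|_{X^\star}\,dt\le K(R)(2M+\|b\|_{L^1})=:C_1$. For the second integral, the Hölder inequality~\eqref{holder}, applied after the usual sign adjustment of $\xi_{yt}$ to absorb absolute values, bounds it by $2\|u_2-v_2\|_{L^\phi}\|\xi_{yt}\|_{L^{\phi^\star}}$. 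Integrating the $\phi^\star$-piece of \eqref{Hip4} yields $\rho_{\phi^\star}(\xi_{yt}/\lambda)\le C_1$, and then \eqref{eq:amemiya} gives $\|\xi_{yt}\|_{L^{\phi^\star}}\le\lambda(C_1+1)$, completing the Lipschitz estimate with $C=\max\{C_1,\,2\lambda(C_1+1)\}$.

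The main obstacle I anticipate is the measurable selection step, ensuring that the intermediate point $z(t)$ and the subgradient $\xi_t$ delivered by Lebourg's theorem can be chosen to depend measurably on $t$; once that is handled, the remainder of the argument is a direct application of the growth condition \eqref{Hip4} together with Lemmas~\ref{lema:cotauniforme} and \ref{lema:cotanemi}.
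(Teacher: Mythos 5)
Your argument is essentially the paper's: Lebourg's mean value theorem applied pointwise in $t$, the splitting of $\xi_t$ into $(\xi_{xt},\xi_{yt})$ (which, note, uses regularity via \cite[Prop.~2.3.15]{clarke1990optimization}), the uniform modular bound of Lemma~\ref{lema:cotauniforme} on the ball, and Lemma~\ref{lema:cotanemi} to control $a(z_1(t))$. The one place you diverge is the estimate of $\int_0^T\langle\xi_{yt},u_2-v_2\rangle\,dt$: you pass to the norm level via H\"older's inequality \eqref{holder} and \eqref{eq:amemiya}, which forces you to regard $t\mapsto\xi_{yt}$ as an element of $L^{\phi^\star}([0,T],X^\star)$ and hence to justify its (Bochner) measurability --- precisely the ``measurable selection'' obstacle you single out, and one that is genuinely delicate for a general separable $X$ whose dual need not be separable. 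The paper instead applies the Fenchel inequality (Proposition~\ref{propos:Gfuncion_elementales}(4)) \emph{pointwise} to $\langle\xi_{yt}/\lambda,\,(u_2-v_2)/\|u_2-v_2\|_{L^\phi}\rangle$, obtaining a scalar majorant $\phi^\star(\xi_{yt}/\lambda)+\phi\bigl((u_2-v_2)/\|u_2-v_2\|_{L^\phi}\bigr)$ which is in turn dominated, via \eqref{Hip4}, by a manifestly measurable and integrable function of $t$; the same remark applies to your standalone integral $\int_0^T\|\xi_{xt}\|_{X^\star}\,dt$. Since only measurable majorants of the (Carath\'eodory, hence measurable) integrand $\mathcal{L}(t,u(t))-\mathcal{L}(t,v(t))$ are ever needed, no measurability of the selection $\xi_t$ is required at all, and the obstacle you anticipate simply does not arise; with that substitution your proof closes, and the resulting constants agree with the paper's up to bookkeeping.
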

\begin{proof}
From condition \ref{hip3dif}, Lemma \ref{lema:cotanemi} and using \cite[Th. 5.1]{orliczvectorial2005} we have that 
\begin{multline*}
 |\mathcal{L}(t,u_1(t),u_2(t))| \leq a(u_1(t)) \left(\phi \left(\frac{u_2(t)}{\Lambda}\right)+ b(t)\right)\\
 \leq K(\|u_1\|_{L^\infty})\left(\phi \left(\frac{u_2(t)}{\Lambda}\right)+ b(t)\right) \in L^1([0,T],\mathbb{R}).
\end{multline*}
Hence $I(u)<\infty$.

Let $u=(u_1,u_2), v=(v_1,v_2) \in L^\infty([0,T],X)\times L^\phi ([0,T],X)$. Then by \ref{hip2dif}, Mean Value Theorem \cite[Th. 10.17]{clarke2013functional}, there exists $\xi_t \in \partial \mathcal{L}(t,w(t))$ such that
\[ \mathcal{L}(t,u(t))-\mathcal{L}(t,v(t)) = \left< \xi_t ; u(t)-v(t)\right>.\]
 The function  $w(t)=(w_1(t),w_2(t))$ belongs to the line segment joining $u(t)$ and $v(t)$, i.e.  $w(t)=\alpha(t)u(t)+(1-\alpha(t))v(t)$, where $\alpha(t) \in [0,1]$.
Since  $\mathcal{L}(t,\cdot,\cdot)$ is regular,  from \cite[Prop. 2.3.15]{clarke1990optimization}, we can identify  $\xi_t$ with an element $(\xi_{xt},\xi_{yt}) \in \partial_{x} \mathcal{L}(t,w(t))\times \partial_{y} \mathcal{L}(t,w(t))  $.

As consequence
\begin{equation}\label{ecua:acotac}
    \begin{split}
	\mathcal{L}(t,u(t))-\mathcal{L}(t,v(t)) &= \left< \xi_{xt}; u_1(t)-v_1(t)\right> +\left< \xi_{yt}; u_2(t)-v_2(t)\right>\\
	&=:I_1+I_2.
	\end{split}
\end{equation}
From \ref{hip3dif} and by Lemma \ref{lema:cotanemi}, we deduce that, for almost every where $ t\in[0,T]$,
\begin{equation}\label{ecua:sum1}
 \begin{split}
   I_1  & \leq  \|\xi_{xt}\|_{X^\star} \|u_1(t)-v_1(t)\|_X \\
&\leq  a(w_1(t)) \left [\alpha(t) \phi\left( \frac{u_2(t)}{\Lambda} \right)+(1-\alpha(t))\phi\left( \frac{v_2(t)}{\Lambda}\right)+b(t)\right]\\
&\quad  \times \|u_1-v_1\|_{L^\infty}\\
 &\leq  K(\|w_1\|_{L^\infty}) \left [\phi\left( \frac{u_2(t)}{\Lambda} \right)+\phi\left( \frac{v_2(t)}{\Lambda} \right) + b(t)\right]\|u_1-v_1\|_{L^\infty }.
 \end{split}
\end{equation}

On the other hand, from Proposition \ref{propos:Gfuncion_elementales} (4)
\begin{equation}\label{provisorio}
I_2\leq
  |\lambda| \|u_2-v_2\|_{L^\phi}  \left[\phi^\star \left( \frac{\xi_{yt}}{\lambda}\right) +\phi \left( \frac{u_2(t)-v_2(t)}{\|u_2-v_2\|_{L^\phi}}\right) \right].
\end{equation}

In a similar way to  the inequalities \eqref{ecua:sum1}
\begin{equation}\label{provisorio2}
\phi^\star \left( \frac{\xi_{yt}}{\lambda}\right)\leq K(\|w_1\|_{L^\infty}) \left [\phi\left( \frac{u_2(t)}{\Lambda} \right)+\phi\left( \frac{v_2(t)}{\Lambda} \right) + b(t)\right].
\end{equation}

Taking account of \eqref{provisorio} y \eqref{provisorio2}, and writing $z=\frac{u_2(t)-v_2(t)}{\|u_2-v_2\|_{L^\phi}}$,
\begin{equation}\label{ecua:sum2}
I_2\leq | \lambda| \|u_2-v_2\|_{L^\phi}
\left\{K(\|w_1\|_{L^\infty}) \left [\phi\left( \frac{u_2}{\Lambda} \right)+\phi\left( \frac{v_2}{\Lambda} \right) + b(t)\right]  + \phi\left( z\right) \right\}
\end{equation}
Finally from \eqref{ecua:acotac}, \eqref{ecua:sum1}, \eqref{ecua:sum2}, if $u_0 \in L^\infty([0,T],X)\times\Pi(\phi, \Lambda)$ and we take $\delta$ as in Lema \ref{lema:cotauniforme}, we arrived at
\[
|I(u)-I(v)|
\leq K(\|u_1-v_1\|_{L^\infty}+\|u_2-v_2\|_{L^\phi}), \hspace{0,3cm} \forall  u,v \in  B(u_0, \delta).
\]
\end{proof}
As consequence the previous theorem, we can apply \cite[Def. 10.3]{clarke2013functional} we can assure $\partial I(u)\neq \emptyset$. The following theorem give us a characterization of the set $\partial I(u)$. 

\begin{teo}\label{teodiff}
We assume \ref{hip1dif}-\ref{hip3dif}. Then we have that:
\begin{equation}\label{inclusionintegral}
    \partial I(u) \subset \int_0^T{\partial \mathcal{L}(t,u(t)) dt,}
\end{equation} 
for every $u=(u_1,u_2) \in L^\infty([0,T],X) \times \Pi(\phi, \Lambda)$.
\end{teo}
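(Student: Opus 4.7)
The plan is to adapt the classical Aubin--Clarke argument for generalized subdifferentials of integral functionals to our Orlicz--Sobolev setting. Given $\xi\in\partial I(u)$, the goal is to exhibit a measurable selection $\xi_t\in\partial\mathcal{L}(t,u(t))$ such that $\xi$ acts on every admissible direction $v=(v_1,v_2)\in L^\infty([0,T],X)\times L^\phi([0,T],X)$ as $v\mapsto\int_0^T\langle \xi_t,v(t)\rangle\,dt$, which is the meaning of $\xi\in\int_0^T\partial\mathcal{L}(t,u(t))\,dt$ in the Aumann sense.

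The cornerstone will be the pointwise-to-integral estimate
\[
 I^0(u;v)\leq \int_0^T\mathcal{L}^0(t,u(t);v(t))\,dt.
\]
To prove this, I would write
\[
 \frac{1}{\lambda}\bigl[I(w+\lambda v)-I(w)\bigr]=\int_0^T\frac{1}{\lambda}\bigl[\mathcal{L}(t,w(t)+\lambda v(t))-\mathcal{L}(t,w(t))\bigr]\,dt,
\]
take the limsup as $w\to u$ in $L^\infty\times L^\phi$ and $\lambda\searrow 0$, and push the limsup inside the integral via the reverse Fatou lemma. The required $L^1$ domination is built from the Lipschitz bounds \eqref{ecua:sum1} and \eqref{ecua:sum2} of the previous proposition, combined with Lemma \ref{lema:cotauniforme}, which ensures that $\int_0^T\phi(w_2(t)/\Lambda)\,dt$ stays uniformly bounded when $w_2$ ranges in a small $L^\phi$-ball around $u_2$. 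The pointwise limsup is at most $\mathcal{L}^0(t,u(t);v(t))$ directly from the definition of the generalized directional derivative applied to $\mathcal{L}(t,\cdot,\cdot)$.

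Once this inequality is established, the regularity assumption \ref{hip2dif} rewrites the integrand as the support function of $\partial\mathcal{L}(t,u(t))$, giving
\[
 \langle\xi,v\rangle\leq I^0(u;v)\leq \int_0^T\sigma\bigl(\partial\mathcal{L}(t,u(t)),\,v(t)\bigr)\,dt
\]
for every admissible $v$. The Caratheodory hypothesis on $\mathcal{L}$ together with the upper semicontinuity of the Clarke subdifferential makes $t\mapsto\partial\mathcal{L}(t,u(t))$ a measurable closed-convex-valued multifunction; a standard Castaing--Rockafellar representation then converts the displayed inequality into the existence of a measurable selection $\xi_t\in\partial\mathcal{L}(t,u(t))$ representing $\xi$, which is the desired inclusion.

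The main obstacle is the reverse Fatou step: the difference quotients are not controlled pointwise but only in the integrated sense supplied by \ref{hip3dif}, so one has to produce a genuine $L^1$ envelope uniform over the approximating net $(w,\lambda)$. Lemma \ref{lema:cotauniforme} is purpose-built for this, but care is needed because $w_1\to u_1$ only in $L^\infty$ while $w_2\to u_2$ in $L^\phi$, so the envelope must combine the uniform $L^\infty$-bound on $K(\|w_1\|_{L^\infty})$ with the uniform integrability of $\phi(w_2/\Lambda)$. A secondary technical point is verifying measurability of the product multifunction $\partial_x\mathcal{L}\times\partial_y\mathcal{L}$ along $u$, which however follows routinely from \ref{hip2dif} and the regularity identification used already in the proof of the preceding proposition.
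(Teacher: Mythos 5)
Your overall strategy coincides with the paper's: first prove the key inequality $I^0(u;v)\leq\int_0^T\mathcal{L}^0(t,u(t);v(t))\,dt$, then convert the resulting support-function inequality into a representation of $\xi$ by an integrable selection of $t\mapsto\partial\mathcal{L}(t,u(t))$. Your second step, phrased as a Castaing--Rockafellar selection argument, is the same move the paper makes by introducing the convex integrand $\mathcal{L}_u(t,v)=\mathcal{L}^0(t,u(t);v)$, observing $\xi\in\partial_{\mathrm{conv}}I_u(0)$, and invoking Ioffe's representation of subdifferentials of convex integral functionals; no substantive difference there.

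The genuine gap is in your reverse-Fatou step. Lemma \ref{lema:cotauniforme} only yields a \emph{uniform bound on the modulars} $\int_0^T\phi(w_2/\Lambda)\,dt\leq M$ over a small ball; it does not produce a single $L^1$ function dominating the difference quotients pointwise, which is what reverse Fatou (or dominated convergence) requires. Moreover, to get the pointwise bound $\limsup_n\lambda_n^{-1}\bigl[\mathcal{L}(t,y_n(t)+\lambda_n v(t))-\mathcal{L}(t,y_n(t))\bigr]\leq\mathcal{L}^0(t,u(t);v(t))$ you need $y_n(t)\to u(t)$ for a.e.\ $t$, which $L^\phi$-convergence of $y_{2,n}$ does not give directly. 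The paper fills both holes at once: after realizing the $\limsup$ defining $I^0(u;v)$ along a sequence $(y_n,\lambda_n)$, it applies \cite[Lemma 4.7]{Mazzone2019} to extract a further subsequence with $y_{2,n}\to u_2$ a.e.\ \emph{and} a genuine majorant $h\in L^1$ with $\phi\bigl(y_{2,n}/((1-\alpha)\Lambda)\bigr)\leq h$; combined with \ref{hip3dif}, the mean value inequality, and the uniform control of $a(w_{1n})$ via the $L^\infty$-convergence of the first component, this yields the required $L^1$ envelope $h_1$ and lets dominated convergence go through. Without that extraction your interchange of $\limsup$ and integral is not justified: uniform integrability of $\phi(w_2/\Lambda)$ alone, with no a.e.\ (or in-measure) convergence of the difference quotients, is not enough for a Vitali-type argument either. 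The rest of your outline is sound.
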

\begin{remark}
 The inclusion \eqref{inclusionintegral} is interpreted as follows, if $\xi \in \partial I(u)$, for \text{ a.e. } $t\in [0,T]$, there exists $\xi_t \in \partial \mathcal{L}(t,u(t))$  such that if $v=(v_1,v_2)\in L^\infty([0,T],X)\times L^\phi([0,T],X) $, the map $t \to \left<\xi_t, v(t)\right> \in L^1([0,T],\mathbb{R})$ and
\[\left<\xi, v\right>=\int_0^T \left<\xi_t , v(t)\right> dt.\]

\end{remark}

\begin{proof}
First, let us see that

\begin{equation}\label{ineteodif5}
 \int_0^T{\mathcal{L}^0(t,u(t);v(t))dt} \geq I^0(u; v),
\end{equation}
for every $v  \in L^\infty([0,T],X)\times L^\phi([0,T],X).$
We know that 
\[ I^0(u; v)= \limsup\limits_{y\rightarrow u, \lambda \downarrow 0} \int_0^T \frac{ \mathcal{L} (t, y(t)+ \lambda v(t))- \mathcal{L}(t,y(t))}{\lambda} dt.\]
We can find $\{y_n\}_{n\in \mathbb{N}},\{\lambda_n\}_{n\in \mathbb{N}}$
with $y_n\to u$, $\lambda_n \downarrow 0$ . 

\begin{equation}\label{limsup}
   I^0(u; v)= \lim_{n\rightarrow \infty} \int_0^T \frac{ \mathcal{L} (t,y_n(t)+ \lambda_n v(t))- \mathcal{L}(t,y_n(t))}{\lambda_n} dt.
\end{equation}

We write $y_n=(y_{1,n},y_{2,n})$. Since $u_2\in \Pi(\phi, \Lambda)$, there exists $0<\alpha<1$ small enough such that $d(u_2, L^\infty)<(1-\alpha)\Lambda$. From \cite[Lemma 4.7]{Mazzone2019} there exists a subsequence of $\{y_{2,n}\}$(w.l.g. again denoted  $\{y_{2,n}\}$) and $h \in L^1([0,T], \mathbb{R})$, satisfying $y_{2,n} \to u_2$ a.e. $t\in [0,T]$ and
\begin{equation}\label{ineaux1}
 \phi \left(\frac{y_{2n}}{(1-\alpha)\Lambda}\right)\leq h.
\end{equation}

On the other hand, we know that $v_2 \in L^\phi([0,T],X)$. Hence, there exists $\delta>0$, satisfying
\begin{equation}\label{ineaux2}
   \int_0^T \phi(\delta v_2)dt <\infty.
\end{equation}

Now,  using the Mean Value Theorem \cite[Th. 10.17]{clarke2013functional}, we conclude that there exists $\xi_{tn} \in \partial \mathcal{L}(t,w_n(t))$, where
 $w_n(t)$ lies on the line segment joining $y_n(t)+\lambda_n v(t)$ and $y_n(t)$,  such that

 \begin{equation}\label{ineteodif4}
  \frac{\mathcal{L}(t,y_n(t)+\lambda_n v(t))-\mathcal{L}(t,y_n(t))}{\lambda_n} = \left< \xi_{tn} ; v(t)\right>.
 \end{equation}
We write $\xi_{tn}:=(\zeta_{tn},\omega_{tn})$ (see  \cite[Prop. 2.3.15]{clarke1990optimization} ), so that
\begin{equation}\label{ineteodif0}
 \left< \xi_{tn}, v(t)\right>=\left<\zeta_{tn},v_1(t)\right>+\left<\omega_{tn},v_2(t)\right>.
\end{equation}

By \ref{hip3dif} and following the same reasoning as  in inequality \eqref{ecua:sum1}, we obtain

\begin{multline}\label{ineteodif1}
\left< \zeta_{tn}, v_1(t)\right>  \\
\leq a(w_{1n})\left((1-\alpha)\phi\left( \frac{y_{2,n}}{(1-\alpha)\Lambda}\right)+\alpha\phi\left( \frac{\lambda_n v_2}{\alpha\Lambda} \right)+\phi\left(\frac{y_{2,n}}{\Lambda}\right)+b(t)\right)\|v_1\|_{L^\infty}.
\end{multline}

In similar way as in inequalities \eqref{provisorio}-\eqref{ecua:sum2}

\begin{multline}\label{ineteodif2}\small
\left< \omega_{tn},v_2(t)\right>
 \leq |\lambda| \|v_2\|_{L^\phi}\\
 \times \left\{ a(w_{1n}) \left [(1-\alpha)\phi\left( \frac{y_{2,n}}{(1-\alpha)\Lambda}\right)+\alpha\phi\left( \frac{\lambda_n v_2}{\alpha\Lambda}\right)+\phi\left( \frac{y_{2,n}}{\Lambda} \right) + b\right]\right. \\
 \left.+ \phi\left( \frac{v_2}{\|v_2\|_{L^\phi}}\right) \right\}.
 \end{multline}

Since $y_{1n}+\lambda_n v_1 \to u_1$ in $L^\infty([0,T],X)$, we can assume
  \[a(w_{1n})\leq K(\|y_{1n}+\lambda_n v_1\|_{L^\infty}+\|y_{1n}\|_{L^\infty})\leq K(\|u\|_{L^\infty}+1).\]
  
Taking $n$ large enough so that ${\lambda_n}/{\alpha \Lambda}<\delta$, and using \eqref{ineaux1}, \eqref{ineaux2},\eqref{ineteodif2} and the Proposition \ref{propos:Gfuncion_elementales}(2),  we conclude that there exists $h_1 \in L^1([0,T],\mathbb{R})$ such as
$$\big|\left(\mathcal{L}(t,y_n(t)+\lambda_n v(t))-\mathcal{L}(t,y_n(t))\right)\lambda_n^{-1}\big|\leq h_1(t).$$
Finally, by Lebesgue dominated convergence theorem, we obtain
\begin{equation*}
\begin{split}
 \int_0^T{\mathcal{L}^0 (t,u(t),v(t)) dt}&\geq\int_0^T {\lim\limits_{n\rightarrow \infty}\frac{ \mathcal{L}(t,y_n+ \lambda_n v)- \mathcal{L}(t,y_n)}{\lambda_n} dt}\\
 &=\lim\limits_{n\rightarrow \infty} \int_0^T \frac{ \mathcal{L}(t,y_n+ \lambda_n v)- \mathcal{L}(t,y_n)}{\lambda_n} dt = I^0 (u,v).
\end{split}
\end{equation*}
Hence the proof of \eqref{ineteodif5} is completed.

Given 
$u=(u_1,u_2) \in L^\infty([0,T],X)\times\Pi(\phi, \Lambda)$, we define $\mathcal{L}_u:[0,T]\times X\times X\rightarrow \mathbb{R}$ in the following way,
\begin{equation}\label{equateodif1}
 \mathcal{L}_u(t,v) = \mathcal{L}^0(t,u(t);v), \quad v\in X\times X.
\end{equation}

We considerer the corresponding integral function of the function $\mathcal{L}_u$, given by 
\begin{equation}\label{equateodif2}
I_u(v) = \int_0^T{ \mathcal{L}_u(t,v(t))dt}. 
\end{equation}
for every $ v=(v_1,v_2)\in L^\infty([0,T],X)\times L^\phi([0,T],X).$
 
Let $\xi \in \partial I(u)$, by \eqref{ineteodif5}, \eqref{equateodif1} and \eqref{equateodif2} 
\[I_u(v)-I_u(0) =I_u(v)\geq \left<\xi, v\right>.\]

Because $I_u(\cdot)$ is a convex function, the previous inequality implies that $\xi \in \partial_{\text{conv}} I_u(0)$, where $\partial_{\text{conv}}$ denotes the subdifferential in the convex analysis sense (see \cite{clarke2013functional}). Then, we can  apply the results in \cite{ioffe1972subdifferentials} to the functional $I_u$ and conclude that $\exists \hspace{0,1cm}\xi_t \in \partial_{\text{conv}} \mathcal{L}_u(t,0) $ such that
\[\left<\xi, v\right> = \int_0^T{ \left<\xi_t, v(t)\right> dt}.\]

In a similar way, since $\mathcal{L}_u(t,\cdot)$ is convex, we have
\[\left<\xi_t;v\right>\leq \mathcal{L}_u(t;v)-\mathcal{L}_u(t;0)=\mathcal{L}^0(t,u;v).\]
Hence, $\xi_t \in \partial \mathcal{L}(t,u(t))$.
\end{proof}

We define the linear operator $\mathbb{F}:W ^{1,\phi} \to  L^\infty  \times L^\phi ,$
by $\mathbb{F}(u):=(u,u')$. Therefore $\mathcal{I}=I \circ \mathbb{F}$.  Using Proposition \ref{prop:prop_espacios} \eqref{it:1}   we obtain that $\mathbb{F}$ is bounded. In particular $\mathbb{F}$ has a Fréchet derivative, in fact $\mathbb{F}'(u)=\mathbb{F}$. So, if we apply the chain rule \cite[Th. 10.19]{clarke2013functional}
\[\partial \mathcal{I}(u)\subset \mathbb{F}'^\star \partial I(\mathbb{F}(u))\]
where, as is usual, $\mathbb{F}'^\star$ denotes the adjoint operator, i.e $\mathbb{F}'^\star:(L^\infty\times L^\phi)^\star \to (W ^{1,\phi})^\star$, and it satisfies for every $\xi \in (L^\infty\times L^\phi)^\star \text{ and } v \in W ^{1,\phi}$ that
$   \left<\mathbb{F}^\star\xi, v\right>=\left<\xi, \mathbb{F} (v)\right>
$.
As consecuence if $\zeta \in \partial \mathcal{I}(u)\subset W^{-1,\phi}$, there exists $\xi \in \partial I(\mathbb{F}(u)) $ with $ \zeta=\mathbb{F}^\star\xi $. Since $\xi \in \partial I(\mathbb{F}(u))$, from Theorem \ref{teodiff} exists $\xi_t=(\xi_{xt},\xi_{yt}) \in \partial_{x} \mathcal{L}(t,\mathbb{F}(u(t)))\times \partial_{y} \mathcal{L}(t,\mathbb{F}(u(t))) $ such that every $v \in L^\infty\times L^\phi$ we have
\begin{equation}\label{adjunto2}
   \left<\xi, v\right>=\int_0^T \left<\xi_t , v(t)\right> dt. 
\end{equation}
Now, if $v \in W ^{1,\phi}$, applying \eqref{adjunto2} we obtain that
\begin{equation}\label{adjunto1}
  \left<\zeta, v\right>=\left<\mathbb{F}^\star\xi, v\right>=\left<\xi, (v,v')\right>=\int_0^T \left<\xi_{xt} , v(t)\right>+\left<\xi_{yt} , v'(t)\right> dt.  
\end{equation}

The following corollary is thus proved.
\begin{Cor}\label{inclusionderivadafuncional} Suppouse that $u \in \wphi([0,T], X)$ satisfies $ u' \in \Pi(\phi, \Lambda)$. Asuming the condition \ref{hip1dif}-\ref{hip3dif} we have that \[\partial \mathcal{I}(u)\subset \int_0^T \partial \mathcal{L}(t,u(t),u'(t))dt.\]
\end{Cor}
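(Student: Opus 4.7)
The plan is to write $\mathcal{I}=I\circ\mathbb{F}$ where $\mathbb{F}:W^{1,\phi}([0,T],X)\to L^\infty([0,T],X)\times L^\phi([0,T],X)$ is the linear map $\mathbb{F}(u)=(u,u')$, and then reduce the corollary to Theorem \ref{teodiff} via the chain rule for the Clarke subdifferential. The only nontrivial input is Theorem \ref{teodiff} itself; everything else is essentially bookkeeping.

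First I would verify that $\mathbb{F}$ is a bounded linear operator. The boundedness of $u\mapsto u$ from $W^{1,\phi}$ into $L^\infty$ is Proposition \ref{prop:prop_espacios} item \ref{it:2}, and the boundedness of $u\mapsto u'$ into $L^\phi$ is immediate from the definition of the norm on $W^{1,\phi}$. Being linear and bounded, $\mathbb{F}$ is Fréchet differentiable everywhere with $\mathbb{F}'(u)=\mathbb{F}$. Moreover, for $u\in W^{1,\phi}$ with $u'\in\Pi(\phi,\Lambda)$, the image $\mathbb{F}(u)=(u,u')$ lies in $L^\infty\times\Pi(\phi,\Lambda)$, which is precisely the set where Theorem \ref{teodiff} applies.

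Next I would invoke Clarke's chain rule \cite[Th.~10.19]{clarke2013functional}, which gives $\partial\mathcal{I}(u)\subset\mathbb{F}^\star\partial I(\mathbb{F}(u))$. Given $\zeta\in\partial\mathcal{I}(u)$, pick $\xi\in\partial I(\mathbb{F}(u))$ with $\zeta=\mathbb{F}^\star\xi$. By Theorem \ref{teodiff} there exists a measurable selection $\xi_t=(\xi_{xt},\xi_{yt})\in\partial\mathcal{L}(t,u(t),u'(t))$ such that
\[
\langle\xi,(v_1,v_2)\rangle=\int_0^T\langle\xi_{xt},v_1(t)\rangle+\langle\xi_{yt},v_2(t)\rangle\,dt
\]
for every $(v_1,v_2)\in L^\infty\times L^\phi$. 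For $v\in W^{1,\phi}$, the adjoint identity $\langle\mathbb{F}^\star\xi,v\rangle=\langle\xi,(v,v')\rangle$ then yields the desired integral representation, which is the content of the corollary.

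The main conceptual obstacle is ensuring that the chain rule applies cleanly in this Banach-space setting; concretely, one needs $I$ to be locally Lipschitz on a neighborhood of $\mathbb{F}(u)$ in $L^\infty\times L^\phi$, and this is furnished by the Proposition immediately preceding Theorem \ref{teodiff} together with the hypothesis $u'\in\Pi(\phi,\Lambda)$. Once this is in hand, the composition step is routine and the statement follows.
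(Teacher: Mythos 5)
Your proposal is correct and follows essentially the same route as the paper: factor $\mathcal{I}=I\circ\mathbb{F}$ with $\mathbb{F}(u)=(u,u')$ bounded linear, apply Clarke's chain rule, and then use Theorem \ref{teodiff} together with the adjoint identity $\langle\mathbb{F}^\star\xi,v\rangle=\langle\xi,(v,v')\rangle$ to obtain the integral representation. Your explicit remark that the chain rule requires $I$ to be locally Lipschitz near $\mathbb{F}(u)$ (guaranteed by the hypothesis $u'\in\Pi(\phi,\Lambda)$ and the preceding Proposition) is a point the paper leaves implicit, and your citation of the embedding $\wphi\hookrightarrow C^0$ for the $L^\infty$ bound is if anything the more precise reference.
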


\section{Proofs of Main Theorems}
Henceforth, we assume that $X=\mathbb{R}^n$ i.e. we will consider Sobolev Spaces of $\mathbb{R}^n$-valued functions. We now introduce the following Lagrangian function:
\begin{equation}
\mathcal{L}(t,x,y)=\phi(y)+F(t,x),
\end{equation}
where we assume:

\begin{itemize}
 \item $\phi:\mathbb{R}^n \to \mathbb{R}$  is a continuously differentiable $G$-function,
 \item $F$ is a locally Lipschitz and regular function for almost everywhere $t\in [0,T]$,
 \item $F$ satisfies \ref{condacotación}.
\end{itemize}

From now on, we will omit the domain and codomain in the notation of functional spaces—for example, instead of writing $ L^1([0,T], \mathbb{R}^n) $, we will simply write $ L^1 $.

\begin{Propos}\label{ProposAcotación}
The Lagrangian $\mathcal{L}$ satisfies  \ref{hip1dif}-\ref{hip3dif}.
\end{Propos}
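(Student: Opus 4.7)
The plan is to verify the three hypotheses \ref{hip1dif}--\ref{hip3dif} in turn, exploiting the separable structure $\mathcal{L}(t,x,y)=\phi(y)+F(t,x)$. The first two are essentially free from the standing assumptions, while the third rests on a single clever choice of the constant $\Lambda$.

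For \ref{hip1dif}, $\phi$ is a $G$-function by hypothesis \ref{hip:H1}, and since it is continuously differentiable on $\mathbb{R}^n$ it is automatically bounded on bounded sets.

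For \ref{hip2dif}, I would note that $F(t,\cdot)$ is regular and locally Lipschitz by \ref{hip:H2}, while $\phi$, being convex and continuously differentiable, is both locally Lipschitz and regular (Clarke 1990, Prop. 2.3.6). Regularity and the local Lipschitz property are preserved under addition of functions depending on independent variables, so $\mathcal{L}(t,\cdot,\cdot)$ inherits both.

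The main (but still routine) calculation is \ref{hip3dif}. Because $\phi$ does not depend on $x$ and $F$ does not depend on $y$, the partial Clarke gradients decompose as
\[
\partial_x\mathcal{L}(t,x,y)=\partial F(t,x), \qquad \partial_y\mathcal{L}(t,x,y)=\{\nabla\phi(y)\}.
\]
I would then choose $\lambda=1$ and $\Lambda=1/2$. With this choice, Proposition~\ref{propos:Gfuncion_elementales}(3) yields $\phi^\star(\nabla\phi(y))\leq \phi(2y)=\phi(y/\Lambda)$, and monotonicity (Proposition~\ref{propos:Gfuncion_elementales}(2)) yields $\phi(y)\leq \phi(2y)=\phi(y/\Lambda)$. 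Combined with hypothesis~\ref{condacotación}, which gives $|F(t,x)|+|\xi_{xt}|\leq c(x)b(t)$ for every $\xi_{xt}\in\partial F(t,x)$, this leads to
\[
|\mathcal{L}(t,x,y)|+|\xi_{xt}|+\phi^\star(\xi_{yt}/\lambda)\leq 2\phi(y/\Lambda)+c(x)b(t)\leq a(x)\bigl(\phi(y/\Lambda)+b(t)\bigr),
\]
with $a(x):=\max\{2,c(x)\}$, which is bounded on bounded subsets of $\mathbb{R}^n$ because $c$ is.

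There is no serious obstacle here; the only point that requires care is aligning the two terms involving $\phi$ on the right-hand side of \eqref{Hip4}. Taking $\Lambda=1/2$ makes the Fenchel-type bound from Proposition~\ref{propos:Gfuncion_elementales}(3) immediately match the term $\phi(y/\Lambda)$, thereby avoiding any appeal to the $\Delta_2$-condition and keeping $a$ independent of $t$.
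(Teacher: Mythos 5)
Your proposal is correct and follows essentially the same route as the paper: the same choices $\lambda=1$, $\Lambda=1/2$, $a(x)=\max\{2,c(x)\}$, the same appeal to Proposition~\ref{propos:Gfuncion_elementales}(3) for $\phi^\star(\nabla\phi(y))\leq\phi(2y)$, and the same references to Clarke for \ref{hip2dif}. You merely spell out a few steps the paper leaves implicit (the verification of \ref{hip1dif} and the decomposition of the partial Clarke gradients), which is fine.
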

\begin{proof}

 The condition \ref{hip2dif} follows from the assumptions on $F$ and the results in \cite[Sec. 10.2]{clarke2013functional}.

 From Proposition \ref{propos:Gfuncion_elementales} (3), we know that  $\phi^\star(\nabla \phi(y))\leq \phi(2y)$, which, together with \ref{condacotación}, implies \ref{hip3dif} with $\lambda=1, \Lambda=1/2$ y $a(x)=\max \{c(x),2\}$. \end{proof}

\begin{Propos}\label{teosolucion}
Let $\mathcal{I}:\wphit \to \overline{\mathbb{R}}$  be the functional defined in \eqref{funcional}. We suppose that $\phi$ is a strictly convex function. Then, if $u_0 \in \wphit$ is a critical point   of $\mathcal{I}$ ($0\in\partial\mathcal{I}(u_0)$) and $u'_0\in \Pi(\phi,\Lambda)$,  then $u_0$ solves problem \eqref{eq_1}.
\end{Propos}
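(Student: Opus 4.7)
The plan is to derive the Euler–Lagrange inclusion and the natural periodic boundary condition from the critical point condition $0\in\partial\mathcal{I}(u_0)$, using the subdifferential calculus built up in the previous section. First I would verify, via Proposition~\ref{ProposAcotación}, that $\mathcal{L}(t,x,y)=\phi(y)+F(t,x)$ satisfies hypotheses \ref{hip1dif}--\ref{hip3dif}, so that Corollary~\ref{inclusionderivadafuncional} is applicable to $u_0$ (recall $u_0'\in\Pi(\phi,\Lambda)$ is assumed). This yields
\[0\in\partial\mathcal{I}(u_0)\subset\int_0^T\partial\mathcal{L}(t,u_0(t),u_0'(t))\,dt.\]
Because $\phi$ is continuously differentiable, $\partial_y\mathcal{L}(t,x,y)=\{\nabla\phi(y)\}$, and using the regularity of $F$ together with the additive structure of $\mathcal{L}$ and Proposition 2.3.15 of \cite{clarke1990optimization}, we get $\partial\mathcal{L}(t,x,y)=\partial F(t,x)\times\{\nabla\phi(y)\}$. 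Unfolding the integral inclusion, there exists a measurable selection $\xi_t\in\partial F(t,u_0(t))$ (which belongs to $L^1$ by \ref{condacotación}) such that
\[0=\int_0^T\langle\xi_t,v(t)\rangle\,dt+\int_0^T\langle\nabla\phi(u_0'(t)),v'(t)\rangle\,dt,\qquad\forall v\in\wphit.\]

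Next I would exploit the two natural families of test functions contained in $\wphit$. Testing against constant functions $v\equiv c\in\mathbb{R}^n$ (which lie in $\wphit$ with $v'=0$) gives $\langle\int_0^T\xi_t\,dt,c\rangle=0$ for every $c$, hence
\[\int_0^T\xi_t\,dt=0.\]
Testing against $v\in C_c^\infty((0,T),\mathbb{R}^n)\subset\wphit$ shows that the distributional derivative of $t\mapsto\nabla\phi(u_0'(t))$ equals $\xi_t$. Since $\xi_t\in L^1$, a standard result implies that $\nabla\phi(u_0'(\cdot))$ coincides a.e.\ with an absolutely continuous function whose classical derivative satisfies
\[\frac{d}{dt}\nabla\phi(u_0'(t))=\xi_t\in\partial F(t,u_0(t))\quad\text{a.e.\ on }(0,T),\]
which is exactly the differential inclusion in \eqref{eq_1}.

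It remains to check the boundary condition. The equality $u_0(0)=u_0(T)$ is built into membership in $\wphit$. For the derivative, the absolutely continuous representative of $\nabla\phi(u_0')$ has well-defined endpoint values, and the fundamental theorem of calculus together with $\int_0^T\xi_t\,dt=0$ gives
\[\nabla\phi(u_0'(T))-\nabla\phi(u_0'(0))=\int_0^T\xi_t\,dt=0.\]
Strict convexity of the differentiable function $\phi$ forces $\nabla\phi$ to be injective on $\mathbb{R}^n$, so $u_0'(0)=u_0'(T)$ (the endpoint values being interpreted through the AC representative of $\nabla\phi(u_0')$).

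I expect the main obstacle to be the first step: making sure that a single measurable selection $\xi_t$ represents the subgradient uniformly in the test direction $v$, rather than one selection per $v$. This is not automatic from the inclusion $0\in\partial\mathcal{I}(u_0)$, but it is precisely what Corollary~\ref{inclusionderivadafuncional} (built from Theorem~\ref{teodiff} and the measurable selection argument of \cite{ioffe1972subdifferentials}) delivers; the hypothesis $u_0'\in\Pi(\phi,\Lambda)$ is what allows its invocation. Once that is in hand, the remaining steps are standard calculus-of-variations bootstrapping of the natural boundary condition and injectivity of $\nabla\phi$.
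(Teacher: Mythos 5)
Your proposal is correct and follows essentially the same route as the paper: invoke Proposition~\ref{ProposAcotación} and Corollary~\ref{inclusionderivadafuncional} to obtain a single measurable selection $\xi_t\in\partial F(t,u_0(t))$ representing the critical point condition against all $v\in\wphit$, then derive the inclusion and the boundary condition $u_0'(0)=u_0'(T)$ from the injectivity of $\nabla\phi$ under strict convexity. The only cosmetic difference is that the paper packages your two testing steps (constants and compactly supported functions) into a single appeal to the Fundamental Lemma of the Calculus of Variations from Mawhin--Willem, yielding $\nabla\phi(u_0'(s))=\int_0^s\xi_t\,dt+C$ with $\int_0^T\xi_t\,dt=0$ directly.
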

\begin{proof}
According to Proposition \ref{ProposAcotación}, Corollary \ref{inclusionderivadafuncional} and \cite[Th. 10.13]{clarke2013functional} we deduce that if $\zeta \in \partial\mathcal{I}(u)$, $\exists \hspace{0,1cm} \xi_{t} \in \partial F(t,u(t))$ such that
\[\left<\zeta,v\right> =\int_0^T \nabla \phi(u'(t))\cdot v'(t)dt+\int_0^T\xi_{t} \cdot v(t) dt \qquad \forall \hspace{0,1cm} v \in \wphit.\]
Therefore, if $u_0$ is critical point of $\mathcal{I}$ we get
\[\int_0^T \nabla \phi(u'_0(t))\cdot v'(t)dt=-\int_0^T\xi_{t} \cdot v(t) dt \qquad \forall \hspace{0,1cm} v \in \wphit,\]
By Fundamental Lemma of Calculus of Variations (see reference \cite{mawhin2010critical}) we infer that $\exists \hspace{0,1cm} C\in \mathbb{R}^n$ such that
\begin{equation}\label{consecuencialemafundamental}
 \nabla \phi(u'_0(s))=\int_0^s \xi_t dt + C \qquad\text{and} \qquad \int_0^T \xi_t dt = 0.
\end{equation}
Hence $\nabla \phi(u'_0)$ is an absolutelly continuous function and
\[\frac{d\nabla \phi(u'_0(t))}{dt}=\xi_t \in \partial F(t,u_0(t)) \qquad \text{a. e. } t\in [0,T].\]

Therefore, the inclusion in problem \eqref{eq_1} holds.

The strict convexity of $\phi$, implies that $\nabla \phi$ is a one-to-one map. Since \eqref{consecuencialemafundamental} implies that $\nabla \phi(u'_0(0))=\nabla \phi(u'_0(T))=C$, we conclude that $u'_0(0)=u'_0(T)$. The boundary condition $u_0(0)=u_0(T)$ is automatically satisfied by functions in the space $\wphit$.
\end{proof}

For the proof of Theorem~\ref{coercotasubgrad}, we need to use \cite[Th.~3.6]{buttazzo1998one}. We briefly recall that this theorem establishes the sequential lower semicontinuity of the functional
\[
I(u,v) = \int_0^T \phi(v) + F(t,u) \, dt
\]
in the space $ L^1 \times L^1 $, endowed with the product topology, where the strong topology is considered on the first factor and the weak topology on the second.

\vphantom{a}\\

\noindent \textbf{Proof of Theorem \ref{coercotasubgrad}}
For fixed $ x, y \in \mathbb{R}^n $ and $ t $ such that $ F(t, \cdot) $ is locally Lipschitz, we define
$
g_t(s) = F(t, x + s(y - x)).
$
We observe that $ g_t $ is a Lipschitz function, because $ F $ is locally Lipschitz with respect to its second variable, and is therefore globally Lipschitz on compact subsets.
As a result, $ g_t $ is absolutely continuous, so there exists $ g_t'(s) $ for almost every $ s \in [0,1] $, and

\begin{equation}\label{formaintegral}
 F(t,y)-F(t,x)=g_t(1)-g_t(0)=\int_0^1g_t'(s)ds=\int_0^1 \left<\xi_s,y-x\right>ds \quad \text{ a.e. } t,
\end{equation}
where in the last identity we applied the chain rule and $\xi_s \in \partial F(t,x+s(y-x))$.
Let $\lambda$ be a positive constant whose exact value will be determined later. Because  $\phi_0 \llcurly \phi$, there exists $C(\lambda)>0$ such that
\begin{equation}\label{desigualdadphicero}
 \phi_0(x)\leq \phi \left(\frac{x}{2\lambda}\right)+C(\lambda), \quad x\in \mathbb{R}^n.
\end{equation}
We write $u=\bar{u}+\tilde{u}$ (see Section \ref{Preliminares}). Then,
using \eqref{formaintegral}, Fenchel inequality,  \ref{condB}, the convexidad of $\phi_0$, \eqref{desigualdadphicero}
and Lemma \ref{lem:inclusion orliczII}, we obtain

\begin{eqnarray*}
J&:=&\left|\int_0^T F(t,u)-F(t,\bar{u})dt\right|\leq\int_0^T|F(t,u)-F(t,\bar{u})|dt\\
&=&\int_0^T\int_0^1 \left|\left<\xi_s,\tilde{u}\right>\right|ds dt\\
&\leq& \lambda \int_0^T d(t)\int_0^1 \left[\phi^\star\left(\frac{\xi_s}{d(t)}\right)+\phi \left(\frac{\tilde{u}}{\lambda}\right)\right]ds dt\\
&\leq& \lambda \int_0^T d(t)\int_0^1\left[\phi_0(\bar{u}+s\tilde{u})+1+\phi \left(\frac{\tilde{u}}{\lambda}\right)\right]ds dt\\
&\leq& \lambda \int_0^Td(t)\int_0^1\left[ \frac{1}{2}\phi_0(2\bar{u})+\frac{1}{2}\phi_0(2\tilde{u})+\phi \left(\frac{\tilde{u}}{\lambda}\right)+1\right]ds dt\\
&\leq& \lambda \int_0^Td(t)\int_0^1\left[ \phi_0(2\bar{u})+\frac{3}{2}\phi \left(\frac{\tilde{u}}{\lambda}\right)+C\right]ds dt\\
&\leq& C \phi_0(2\bar{u})+\lambda C_1\int_0^T\phi\left(\frac{Tu'(s)}{\lambda}\right)ds +C
\end{eqnarray*}
where $C = C(\|d\|_{L^1}, \lambda)$ but $C_1 = C_1(\|d\|_{L^1})$. Since $\phi^\star \in \Delta_2$, we can take $l$ satisfying \eqref{eq:nabla2} with $r = \frac{1}{2} \min\{(C_1 T)^{-1}, 1\}$. We note that $l$ is independent of $\lambda$, therefore we can take  $\lambda = lT$. Hence, we obtain

\[J\leq C\phi_0(2\bar{u})+\frac{1}{2}\int_0^T\phi(u'(s))ds+C.\]
Then
\begin{eqnarray*}
\mathcal{I}(u)&=&\int_0^T\phi(u')+F(t,u)-F(t,\bar{u})+F(t,\bar{u})dt\\
&\geq&\frac{1}{2}\int_0^T\phi(u')dt-C \phi_0(2\bar{u})+\int_0^T F(t,\bar{u})dt-C.
\end{eqnarray*}
Suppose that $\|u_n\|_{W^{1,\phi}}\to\infty$. From Proposition \ref{prop:prop_espacios} item \ref{it:4} can assume that $|\bar{u}_n|\to \infty$ or that $\|u'_n\|_{L^\phi}\to \infty$ and $\bar{u}$ remains bounded. In the first case $\mathcal{I}(u_n)\to \infty$ from \ref{condcoermedia1}.  In the second case, \eqref{eq:amemiya}  implies $\int_0^T \phi(u'_n) dt +1\geq \|u'_n\|_{L^\phi} \to \infty.$ and therefore, also in this case $\mathcal{I}(u_n)\to \infty$. In conclusion, we have proved that $\mathcal{I}$ is a coercive functional.

Let $\{u_n\}_{n\in \mathbb{N}}$ be a minimizing sequence, i.e.
\[
\mathcal{I}(u_n) \to \inf_{u} \mathcal{I}(u).
\]
In particular, $\mathcal{I}(u_n)$ is a bounded sequence in $\mathbb{R}$. Hence, since $\mathcal{I}$ is coercive, the sequence $\{u_n\}$ is bounded in $\wphit$. By Proposition \ref{prop:prop_espacios}, item \ref{it:5}, there exists a subsequence of $\{u_n\}$ (again denoted by $\{u_n\}$) which converges uniformly to a certain continuous function $u_0$ with $u_0(0)=u_0(T)$.

From \cite[Th. 7.2]{orliczvectorial2005}, $L^{\phi}$ is reflexive. Since $u'_n$ is a bounded sequence in $L^\phi$, then by \cite[Th. 3.18]{brezis2011functional}, there exists a subsequence (still denoted by $u_n$) such that $u'_n$ converges weakly to $v$ in $L^{\phi}$. It is easy to see that $v$ is the weak derivative of $u_0$, and hence $u_0 \in W^{1,\phi}_T$.

As $L^\infty \hookrightarrow L^{\phi^\star}$, we have that $u'_n$ converges to $u'_0$ in the weak topology of $L^1$. Hence, the hypotheses of \cite[Th. 3.6]{buttazzo1998one} are satisfied, and we conclude that
\[
\mathcal{I}(u_0) \leq \liminf_{n \to \infty} \mathcal{I}(u_n) = \inf_{u} \mathcal{I}(u).
\]
Consequently, $u_0$ is a minimizer of $\mathcal{I}$, and thus a critical point. Using Proposition \ref{teosolucion}, the proof of the theorem is complete. \qed

\begin{remark}
 We note that the Theorem \ref{coercotasubgrad} remains true when condition \ref{condcoermedia1} is replaced by the  weaker condition \[\liminf_{|x|\to\infty}\frac{1}{\phi_0(2x)}\int_0^T F(t,x)dt>C,\]
 where $C$ is a suitable constant $C$.
\end{remark}

\noindent\textbf{Proof Theorem \ref{coercuasi}.} Let $u \in W^{1,\phi}$. We will use, once again, the decomposition
$u=\overline{u}+\tilde{u}$.  Taking into account \ref{cuasisub}, we get
\begin{equation}\label{desigualdadpotencial}
 \frac{1}{\mu}F(t,\lambda\overline{u})-F(t,-\tilde{u})\leq F(t,u).
\end{equation}
We take any $l>\frac{1}{T}\int_{0}^{T}b(t)dt+1$. Applying   \eqref{eq:orden},   with  $ k=\frac{1}{Tl}$, and using Lemma \ref{lem:inclusion orliczII}, we get $C=C(k)>0$ such that

\begin{eqnarray*}
F(t,-\tilde{u})&\leq& b(t)(\phi_0(\tilde{u})+1)\leq b(t)\left(C+\phi\left(\frac{\tilde{u}}{Tl}\right)+1\right)\\
&\leq&b(t)\left(C+\frac{1}{T}\int_0^T\phi\left(\frac{u'}{l}\right)dt+1\right)\\
&=& b(t)C+\frac{b(t)}{lT}\int_0^T\phi\left(u'\right)dt.
\end{eqnarray*}
Then
\begin{equation}\label{desigualdadintegralpotencial}
\int_0^T F(t,-\tilde{u})dt\leq C+R\int_0^T\phi(u'(t))dt,
\end{equation}
where $R= \frac{1}{lT}\int_{0}^{T}b(t)dt<1$.
Using \eqref{desigualdadpotencial}, \eqref{desigualdadintegralpotencial}
\begin{eqnarray*}
\mathcal{I}(u)
&\geq& \int_0^T\phi(u'(t))dt+\frac{1}{\mu} \int_0^T F(t,\lambda\overline{u})dt-\int_0^T F(t,-\tilde{u})dt\\
&\geq&\int_0^T \phi(u'(t))dt + \frac{1}{\mu} \int_0^T F(t,\lambda\overline{u})dt - C-R\int_0^T\phi(u'(t))dt\\
&=& (1-R)\int_0^T\phi(u'(t))dt+ \frac{1}{\mu} \int_0^T F(t,\lambda\overline{u})dt -C.
\end{eqnarray*}

The remainder of the proof follows the same steps as in the proof of  Theorem \ref{coercotasubgrad}.\qed

For the proof of Theorem \ref{coerBO}, we will need to use the so-called \emph{Matuszewska-Orlicz indices}. We will recall bellow their definition and  some basic properties of them. For further details on the topic, we recommend the reference \cite{m}.

\begin{definition}\label{definicionoindices}
Let $\phi$ be an $N$-function. We define the following limits, which are known as the \emph{Matuszewska-Orlicz indices}\index{Matuszewska-Orlicz indices}.

\[
\alpha_{\phi} = \lim_{\lambda \rightarrow 0^+} \frac{\ln\left(\sup_{u \neq 0} \frac{\phi(\lambda u)}{\phi(u)}\right)}{\ln(\lambda)} \text{ and } \beta_{\phi} = \lim_{\lambda \rightarrow \infty} \frac{\ln\left(\sup_{u \neq 0} \frac{\phi(\lambda u)}{\phi(u)}\right)}{\ln(\lambda)}.
\]

\end{definition}

\begin{Propos}\label{prop:prop_mat}
Let $\phi$ be an $N$-function. Then the following statements holds:
\begin{enumerate}\label{propiedadesindices}
\item $1\leq \alpha_{\phi}\leq \beta_{\phi}\leq \infty.$
\item $\frac{1}{\alpha_{\phi}}+\frac{1}{\beta_{\phi^\star}}=1.$
\item $\phi \in \Delta_2^G$ if and only if $\beta_{\phi}<\infty.$
\item $\phi^\star \in \Delta_2^G$ if and only if $\alpha_{\phi}>1.$
\end{enumerate}

\end{Propos}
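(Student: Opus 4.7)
The central object is the function
$$M_\phi(\lambda):=\sup_{u\neq 0}\frac{\phi(\lambda u)}{\phi(u)},\qquad \lambda>0,$$
so that $\alpha_\phi$ and $\beta_\phi$ are the exponential orders of $M_\phi$ at $0$ and $\infty$. I would first record three structural properties: (a) $M_\phi(1)=1$; (b) $M_\phi$ is submultiplicative, $M_\phi(\lambda\mu)\leq M_\phi(\lambda)M_\phi(\mu)$, by writing $\phi(\lambda\mu u)/\phi(u)=[\phi(\lambda\mu u)/\phi(\mu u)]\cdot[\phi(\mu u)/\phi(u)]$; and (c) convexity of $\phi$ with $\phi(0)=0$ gives $\phi(\lambda u)\leq\lambda\phi(u)$ for $\lambda\in[0,1]$ and $\phi(\lambda u)\geq\lambda\phi(u)$ for $\lambda\geq 1$, whence $M_\phi(\lambda)\leq\lambda$ and $M_\phi(\lambda)\geq\lambda$ on the respective ranges. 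Setting $g(s):=\ln M_\phi(e^s)$, (b) means $g$ is subadditive, so Fekete's lemma yields $\beta_\phi=\lim_{s\to+\infty}g(s)/s=\inf_{s>0}g(s)/s$. Correspondingly, $h(t):=-g(-t)$ is superadditive on $(0,\infty)$, and the superadditive Fekete lemma gives $\alpha_\phi=\lim_{t\to+\infty}h(t)/t=\sup_{t>0}h(t)/t$, so both limits in Definition~\ref{definicionoindices} exist.

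\textbf{Item (1).} From (c) one has $g(s)\geq s$ on $(0,\infty)$, yielding $\beta_\phi\geq 1$, and $M_\phi(e^{-t})\leq e^{-t}$ gives $h(t)\geq t$, yielding $\alpha_\phi\geq 1$. For the central inequality, submultiplicativity produces $M_\phi(\lambda)M_\phi(1/\lambda)\geq M_\phi(1)=1$, equivalently $g(s)+g(-s)\geq 0$, i.e.\ $h(s)\leq g(s)$ on $(0,\infty)$; passing to the limit $s\to+\infty$ delivers $\alpha_\phi\leq\beta_\phi$. The bound $\beta_\phi\leq\infty$ is trivial.

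\textbf{Items (3) and (4).} The condition $\phi\in\Delta_2^G$ is exactly $M_\phi(2)=:K<\infty$. Submultiplicativity then gives $M_\phi(2^n)\leq K^n$, hence $\beta_\phi\leq\log_2 K<\infty$. Conversely, if $\beta_\phi<\infty$ then $g(s)/s$ is bounded for large $s$, and submultiplicativity propagates finiteness of $M_\phi$ to every compact subset of $(0,\infty)$, in particular $M_\phi(2)<\infty$. This establishes (3); (4) then follows at once by applying (3) to $\phi^\star$ and invoking (2): $\phi^\star\in\Delta_2^G\iff \beta_{\phi^\star}<\infty\iff 1-1/\alpha_\phi>0\iff\alpha_\phi>1$.

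\textbf{Item (2).} This is the decisive and hardest step. My strategy is to upgrade the limit definitions to the scale characterizations
$$\beta_\phi=\inf\{p>0:\exists C>0,\ \phi(\lambda u)\leq C\lambda^p\phi(u),\ \forall\lambda\geq 1,\ u\neq 0\},$$
with a mirror characterization of $\alpha_\phi$ as the supremum of $p$ for which the reverse inequality $\phi(\lambda u)\geq c\lambda^p\phi(u)$ holds on the same range; both formulas follow from the definitions together with the submultiplicative structure of $M_\phi$. The relation $1/\alpha_\phi+1/\beta_{\phi^\star}=1$ then reduces to the classical Young-conjugate growth duality for $N$-functions: $\phi(u)\gtrsim|u|^p$ at infinity if and only if $\phi^\star(\xi)\lesssim|\xi|^{p^\star}$ at infinity, with $1/p+1/p^\star=1$. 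Plugging $p=\alpha_\phi\pm\varepsilon$ into this equivalence and letting $\varepsilon\to 0$ produces the identity, with the extreme cases $\alpha_\phi=1$ (equivalent to $\beta_{\phi^\star}=\infty$) and $\alpha_\phi=\infty$ treated separately. The principal obstacle is making the growth-conjugacy dictionary quantitative enough to commute with the infimum/supremum defining the indices; I would lean on the detailed treatment in the reference \cite{m} for this step.
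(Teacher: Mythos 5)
The paper offers no argument for this proposition at all --- its entire proof is the citation ``See \cite{m}.'' --- so your proposal already supplies more than the text does. Your treatment of items (1) and (3), and the deduction of (4) from (2) and (3), is correct: the dilation function $M_\phi(\lambda)=\sup_{u\neq 0}\phi(\lambda u)/\phi(u)$ is submultiplicative and nondecreasing, Fekete's lemma gives existence of the limits and the variational formulas $\beta_\phi=\inf_{s>0}g(s)/s$, $\alpha_\phi=\sup_{t>0}h(t)/t$, and the convexity bounds $M_\phi(\lambda)\leq\lambda$ for $\lambda\leq 1$, $M_\phi(\lambda)\geq\lambda$ for $\lambda\geq 1$, together with $h\leq g$, give item (1); the equivalence $\phi\in\Delta_2^G\iff M_\phi(2)<\infty\iff\beta_\phi<\infty$ is also sound.

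The genuine gap is in item (2), on which (4) depends. Your scale characterizations of the indices via the two-sided inequalities $\phi(\lambda u)\leq C\lambda^p\phi(u)$ and $\phi(\lambda u)\geq c\lambda^p\phi(u)$ (for all $u\neq 0$ and all $\lambda\geq 1$) are fine, but the proposed reduction to the power-growth duality ``$\phi(u)\gtrsim|u|^p$ at infinity iff $\phi^\star(\xi)\lesssim|\xi|^{p^\star}$ at infinity'' is not equivalent to them and would give the wrong answer. The indices are defined by suprema over \emph{all} $u\neq 0$, so they are sensitive to the behaviour of $\phi$ near the origin, which a comparison with a pure power at infinity does not see. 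Concretely, for $\phi(u)=e^{|u|}-1-|u|$ one has $\phi(u)\gtrsim|u|^p$ at infinity for every $p$, yet $M_\phi(\lambda)=\lambda^2$ for $\lambda\leq 1$ (the supremum is attained as $u\to 0$), so $\alpha_\phi=2$; correspondingly $\phi^\star(\xi)=(1+|\xi|)\log(1+|\xi|)-|\xi|$ grows only like $|\xi|\log|\xi|$ at infinity and yet $\beta_{\phi^\star}=2$, again because of the origin. Plugging $p=\alpha_\phi\pm\varepsilon$ into the growth dictionary would therefore not produce the identity. The correct mechanism is the exact Legendre conjugation: $\phi(\lambda u)\leq K\phi(u)$ for all $u$ is equivalent, upon conjugating both sides and rescaling, to $\phi^\star\bigl((K/\lambda)\eta\bigr)\geq K\phi^\star(\eta)$ for all $\eta$, i.e.\ $M_\phi(\lambda)\leq K\iff M_{\phi^\star}(\lambda/K)\leq 1/K$; taking $\lambda\to 0^+$ with $K=M_\phi(\lambda)\approx\lambda^{\alpha_\phi}$ and reparametrizing $\mu=\lambda/K$ yields $\beta_{\phi^\star}=\alpha_\phi/(\alpha_\phi-1)$, with the degenerate cases $\alpha_\phi=1$ and $\alpha_\phi=\infty$ handled separately. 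As written, your argument for (2) ultimately falls back on \cite{m} exactly as the paper does, so either carry out the conjugation identity above or keep the proposition as a cited result.
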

\begin{proof} See \cite{m}.\end{proof}

The proof of next lemma can be  found in \cite[Lemma 5.2]{acinas2015some-2}.

\begin{lem}\label{lema5.2}
Let $\phi$ be a $G$-function. For all $0<\mu<\alpha_\phi<\infty$, we have
\[\lim_{\|u\|_{L^\phi}\to \infty}\frac{\rho_\phi(u)}{\|u\|_{L^\phi}^\mu}=\infty.\]
\end{lem}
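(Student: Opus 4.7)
The plan is to extract from the definition of the Matuszewska--Orlicz index a pointwise estimate of the form $\phi(\lambda x) \leq \lambda^{\mu'}\phi(x)$ for small $\lambda$ and some intermediate exponent $\mu'\in(\mu,\alpha_\phi)$, and then couple this estimate with the defining property of the Luxemburg norm to obtain a superlinear lower bound on $\rho_\phi(u)$ in terms of $\|u\|_{L^\phi}$.

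First I would fix $\mu' \in (\mu,\alpha_\phi)$. Writing $h(\lambda):=\sup_{x\neq 0}\phi(\lambda x)/\phi(x)$ and using that, by Definition~\ref{definicionoindices}, $\ln h(\lambda)/\ln\lambda \to \alpha_\phi > \mu'$ as $\lambda\to 0^+$ (together with the fact that $\ln\lambda<0$ near $0$), I obtain some $\lambda_0 \in (0,1)$ such that $h(\lambda)\leq \lambda^{\mu'}$ for every $\lambda \in (0,\lambda_0)$; equivalently,
\[
\phi(\lambda x) \leq \lambda^{\mu'}\phi(x), \qquad x\in\mathbb{R}^n,\ \lambda\in(0,\lambda_0).
\]

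Next, given $u$ with $t:=\|u\|_{L^\phi} > 2/\lambda_0$, I apply this pointwise inequality with $\lambda=2/t$ and integrate over $[0,T]$ to obtain
\[
\rho_\phi(2u/t) \leq (2/t)^{\mu'}\,\rho_\phi(u).
\]
Since $t/2 < t = \|u\|_{L^\phi}$, the infimum definition of the Luxemburg norm forces $\rho_\phi(u/(t/2))=\rho_\phi(2u/t)>1$; otherwise $t/2$ would lie in the set over which the norm is infimized, contradicting $\|u\|_{L^\phi}=t$. Combining these two facts yields $1 < (2/t)^{\mu'}\rho_\phi(u)$, whence $\rho_\phi(u) > 2^{-\mu'}\|u\|_{L^\phi}^{\mu'}$, and therefore
\[
\frac{\rho_\phi(u)}{\|u\|_{L^\phi}^{\mu}} \;>\; 2^{-\mu'}\,\|u\|_{L^\phi}^{\mu'-\mu} \;\longrightarrow\;\infty
\]
as $\|u\|_{L^\phi}\to\infty$, since $\mu'-\mu>0$.

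The only technical point that needs attention is the passage from the limit definition of $\alpha_\phi$ to the uniform pointwise estimate $\phi(\lambda x)\leq\lambda^{\mu'}\phi(x)$ for $\lambda<\lambda_0$. This is a routine but standard consequence of the definition of the Matuszewska--Orlicz indices (see \cite{m}), and is really the only non-cosmetic step; once it is in hand, the rest is a clean two-line comparison built directly from the definition of the Luxemburg norm.
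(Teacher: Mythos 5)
The paper does not give its own proof of this lemma; it simply defers to \cite[Lemma~5.2]{acinas2015some-2}, so there is nothing in-text to compare against. Your argument is correct and self-contained, and it follows the standard route behind that citation: pick $\mu'\in(\mu,\alpha_\phi)$, convert the limit defining $\alpha_\phi$ (using $\ln\lambda<0$) into the uniform bound $\phi(\lambda x)\le\lambda^{\mu'}\phi(x)$ for small $\lambda$, and combine it with the fact that $\rho_\phi\bigl(u/s\bigr)>1$ whenever $s<\|u\|_{L^\phi}$. The only caveat is cosmetic: the pointwise bound for points with $\phi(x)=0$ needs $\phi(\lambda x)\le\lambda\phi(x)=0$ from convexity (automatic when $\phi$ is an $N$-function, which is the setting in which the indices are defined and in which Theorem~\ref{coerBO} invokes the lemma), and this is exactly the ``routine'' step you flagged.
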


\noindent\textbf{Proof Theorem \ref{coerBO}  }
Using Definition \ref{def:bo},  Hölder's inequality, and the Anisotropic  Poincaré-Wirtinger
Inequality (Proposition \ref{prop:prop_espacios}(2)), from which we obtain

\begin{eqnarray*}
\mathcal{I}(u) &=& \rho_\phi(u') + \int_0^T F(t,u)dt \\
&=& \rho_\phi(u') + \int_0^T F(t,u) - F(t,\bar{u})dt + \int_0^T F(t,\bar{u})dt \\
&\geq& \rho_\phi(u') - \int_0^T b(t)(1 + |\tilde{u}|)dt + \int_0^T F(t,\bar{u})dt \\
&\geq& \rho_\phi(u') - \|b\|_{L^1}(1 + \|\tilde{u}\|_{L^\infty}) + \int_0^T F(t,\bar{u})dt \\
&\geq& \rho_\phi(u') - \|b\|_{L^1}(1 + C\|u'\|_{L^\phi}) + \int_0^T F(t,\bar{u})dt,
\end{eqnarray*}
From now on, the reasoning continues as in the proof of the previous theorems using  and Lemma \ref{lema5.2} (with $\mu=1$).  \qed

\begin{remark}\label{obsumapotenciales}
 It is usual to find in the literature potentials $F$ that are the sum of two potentials $F=F_1+F_2$ that satisfy different conditions, see \cite{Tang2010,pacsca2007periodic} to mention just a few examples. We can use easily our results to contemplate such situations.  Note that the key to obtaining the existence of minima is to have coercivity for functional $\mathcal{I}$.
 We observe that if $F_i$, $i=1,2$ are  potentials  satisfying the hypotheses of any of Theorems \ref{coercuasi} or \ref{coercotasubgrad} or \ref{coerBO} then $2F_i$, $i=1,2$  also satisfy them.  Then, using that

 \begin{multline*}
   \mathcal{I}(u)=  \int_0^T\phi(u')+F(t,u)dt\\=\frac12\left(\int_0^T\phi(u')+2F_1(t,u)dt\right)+\frac12\left(\int_0^T\phi(u')+2F_2(t,u)dt  \right)=:\mathcal{I}_1(u)+\mathcal{I}_2(u)
 \end{multline*}
 we see that $\mathcal{I}$ is coercive too. Moreover, one could even weaken the assumptions about one of the potentials, say $F_1$, so that the corresponding functional $\mathcal{I}_1$ could be simply bounded from below.
\end{remark}

\section{Applications}\label{sec:applications}
In this section, we present several examples to illustrate how our results encompass, unify, and extend previously established ones.

\paragraph{$p$-Laplacian.}
When $\phi(x)= \frac{|x|^p}{p}$ with $1<p<\infty$, the differential operator \[\frac{d}{dt} \nabla\phi(u'(t))= \frac{d}{dt} \left\{|u'(t)|^{p-2}u'(t)\right\}\] is known as $p$-Laplacian. The problem of the existence of periodic solutions for nonlinear systems involving this operator has been extensively studied in the literature; see
\cite{pacsca2007periodic,lian2017periodic,li2015infinitely,zhang2012non,Tang2010,xu2007some,tian2007periodic,ma2009existence,Ning2015} for example.

Our results generalize these precedents in several directions. First, we consider a more general, inhomogeneous, and anisotropic differential operator. Second, even when our results are applied to the $p$-Laplacian operator, we obtain improved conditions on the potentials $F$.

For example, we note that condition (viii) in Theorem~$3$ of \cite{pacsca2007periodic}, namely
\begin{equation}\label{condpasca}
\text{if } \zeta \in \partial F(t,x) \text{ then } |\zeta| < c_1 |x|^\alpha + c_2,
\end{equation}
with $\alpha \in [0,p-1)$ is a particular case of our condition \ref{condB} (take $\phi_0(x)= |x|^{\alpha p/(p-1)}$ and $d$ is a suitable constant).
On the other hand, the following example from \cite{acinas2022} shows that there exists a potential $F$ that satisfies condition~\ref{condB} but does not satisfy \eqref{condpasca}.
  Let
\[
 \phi_1(x):=\int_0^{|x|}\frac{s^{p-1}}{\log (s^2+e)}ds.
\]
It was shown in \cite{acinas2022} that there exist $\phi_0\llcurly |x|^p/p$ where $\phi_0=\frac{|x|^p}{q[\log(|x|^2+e)]^q}$ for $x$ large enough and $q=p/(p-1)$. Morever it is verified that
\begin{equation}\label{ejemplohip4}
 \phi^\star\left(\phi_1'(x)\right)\leq \phi_0(x)+C,
\end{equation}
for some $C>0$.
We consider \[F(t,x)= \phi_1(x)(\sin(t)+1/2)+ g(t)\cdot x,\]
where $g:[0,2\pi] \to \mathbb{R}^n$ is an integrable function. We make a brief comment. For the purpose of presenting the mentioned example, it would have been sufficient to take $F=\phi_1$; the other terms in the expression are added solely to avoid trivial solutions and to ensure that the hypotheses are satisfied.

Taking $d=2(|g(t)|+1)$ and using \eqref{ejemplohip4} it is easy to verify  \ref{condB}.

We can think $F=F_1+F_2$ with $F_1= \phi_1(x)(\sin(t)+1/2)$ and $F_2=g(t)\cdot x $. The function $F_1$ satisfy  for some no negative constant $A$
\begin{eqnarray*}
 \frac{1}{\phi_0(2x)}\int_0^T F_1(t,x) dt&=&\frac{A \log(|x|^2+e)^{q-1}}{x^p}\int_0^x\frac{s^{p-1}\log(|x|^2+e)}{\log(s^2+e)}ds\\&>&\frac{A\log(|x|^2+e)^{q-1}}{x^p}\int_0^x s^{p-1}ds \to \infty \text{ when } |x|\to \infty.
\end{eqnarray*}

Additionally,
\[\frac{1}{\phi_0(2x)}\int_0^T F_2(t, x)dt=\frac{1}{\phi_0(2x)} \left(\int_0^T g(t) dt \right)\cdot x \to 0 \text { when }|x|\to \infty\] because $\phi_0$ is a $N$-function. Consequently,  conditions \ref{condcoermedia1} is satisfied by function $F$.  Then we can apply the Theorem \ref{coercotasubgrad}  and we obtain existence of periodic solutions for the $p$-Laplacian problem.

\begin{equation}\left\{
\begin{array}{ll}
   \frac{d}{dt} \left\{|u'(t)|^{p-2}u'(t)\right\} = \nabla_x F(t,u(t)) \quad \hbox{a.e.}\ t \in (0,T)\\
    u(0)-u(T)=u'(0)-u'(T)=0.
\end{array}\right.
\end{equation}

\paragraph{$(p,q)$-Laplacian.}
When $\phi(x_1,x_2)= \frac{|x_1|^p}{p}+\frac{|x_2|^q}{q}$ with $1<p,q<\infty$, the problem \eqref{eq_1} becomes the existence problem of periodic solution for $(p,q)$-Laplacian equation
$$
\begin{cases}\frac{d}{d t}\left(\left|u'_1(t)\right|^{p-2} u'_1(t)\right) \in \partial_{u_1} F\left(t, u_1(t), u_2(t)\right), & \text { a.e. } t \in[0, T], \\ \frac{d}{d t}\left(\left|u'_2(t)\right|^{q-2} u'_2(t)\right)\in \partial_{u_2} F\left(t, u_1(t), u_2(t)\right), & \text { a.e. } t \in[0, T], \\ u_1(0)-u_1(T)=u'_1(0)-u'_1(T)=0, & \\ u_2(0)-u_2(T)=u'_2(0)-u'_2(T)=0,\end{cases}
$$

In \cite{Pasca2011} D. Pa\c{s}ca obtained existence of solutions of the previous problem under the hypotheses

\begin{empheq}{align}
& \zeta_{1} \in \partial_{x_1} F\left(t, x_1, x_2\right) \Rightarrow\left|\zeta_{1}\right| \leq c_{11}\left|x_1\right|^{\alpha_1}+c_{12}\label{eq:pasca1} \\
& \zeta_{2} \in \partial_{x_2} F\left(t, x_1, x_2\right) \Rightarrow\left|\zeta_{2}\right| \leq c_{21}\left|x_2\right|^{\alpha_2}+c_{22}\label{eq:pasca2}
\end{empheq}
with $c_{11}, c_{12}, c_{21}, c_{22}\geq 0$ and $\alpha_1 \in[0, p-1), \alpha_2 \in[0, q-1)$
for all $t \in[0, T]$ and all $\left(x_1, x_2\right) \in \mathbb{R}^n \times \mathbb{R}^n$. Moreover, in \cite{Pasca2011}, it is assumed that

\begin{empheq}{equation}
\frac{1}{\left|x_1\right|^{p^{\prime} \alpha_1}+\left|x_2\right|^{q^{\prime} \alpha_2}} \int_0^T F\left(t, x_1, x_2\right) d t \rightarrow \infty\label{eq:pasca3}
\end{empheq}
where $p'$ ans $q'$ are the conjugate of $p$ and  $q$ respectively. It is easy to see that \eqref{eq:pasca1}--\eqref{eq:pasca3} imply  our conditions  \eqref{eq:condition_grad} and \eqref{eq:I->infinito}  with $\phi_0(x_1,x_2)=\left|x_1\right|^{p^{\prime} \alpha_1}+\left|x_2\right|^{q^{\prime} \alpha_2}$ and $d$ a positive number satisfying $2^{p'-1}(c_{1i}/d)^{p'}+2^{q'-1}(c_{2i}/d)^{q'}<1$, for $i=1,2$. The assumptions of Theorem 2.1 in \cite{Pasca2011} do not imply our inequality \eqref{eq:condacotación}. This inequality is a standard requirement that ensures that the functional $\mathcal{I}$  has a nonempty effective domain. We believe that the author may have omitted a necessary assumption to establish his results. Indeed, the function $ F = 1/t $ satisfies all the assumptions stated in his Theorem 2.1, but in this case  $ \mathcal{I\equiv\infty} $. It would suffice to add to Theorem 2.1 the simple requirement that $ F(t, 0) $ be an integrable function, since this, together with inequality (2.1)  in Theorem 2.1 of \cite{Pasca2011}, would  imply our inequality \eqref{eq:condition_grad}.

On the other hand, it is easy to check that if $p=q=3$ then  the functions $\phi_0(x_1,x_2)=|x_1|^2+|x_2|^2$, $F(x_1,x_2)=|x_1|^{13/6}+|x_2|^{13/6}+x_1x_2$  satisfy the hypotheses of  Theorem \ref{coercotasubgrad}   but not \eqref{eq:pasca1}  and \eqref{eq:pasca2}.

\section{Acknowledgements}
The research has been supported by SECyT-UNRC, grant C561 and FCEyN-UNLPam, grant PI 77 M.

\bibliographystyle{plain}
\bibliography{inclusion}

\def\cprime{$'$}
\begin{thebibliography}{10}

\bibitem{acinas2015some-2}
S.~{A}cinas, L.~{B}uri, G.~Giubergia, F.~Mazzone, and E.~Schwindt.
\newblock Some existence results on periodic solutions of {E}uler-{L}agrange
  equations in an {O}rlicz-{S}obolev space setting.
\newblock {\em Nonlinear Analysis, TMA.}, 125:681–698, 2015.

\bibitem{acinas2019clarke}
S.~Acinas, J.~Maksymiuk, and F.~Mazzone.
\newblock Clarke duality for hamiltonian systems with nonstandard growth.
\newblock {\em Nonlinear Analysis}, 188:1–21, 2019.

\bibitem{Acinas2017}
S.~Acinas and F.~Mazzone.
\newblock Periodic solutions of {E}uler-{L}agrange equations with sublinear
  potentials in an {O}rlicz-{S}obolev space setting.
\newblock {\em Annales Universitatis Mariae Curie-Sklodowska, sectio A –
  Mathematica}, 71(2):1, 2017.

\bibitem{acinas2022}
S.~Acinas and F.~Mazzone.
\newblock Infinitely many periodic solutions for anisotropic $\phi$-{L}aplacian
  systems.
\newblock {\em Differential Equations \& Applications}, 14(4), 2022.

\bibitem{barbu1976nonlinear}
V.~Barbu.
\newblock {\em Nonlinear semigroups and differential equations in Banach
  spaces}.
\newblock Springer, 1976.

\bibitem{brezis2011functional}
H.~Brezis.
\newblock {\em Functional Analysis, Sobolev Spaces and Partial Differential
  Equations}, volume~2 of {\em Universitext}.
\newblock Springer New York, 2011.

\bibitem{buttazzo1998one}
G.~Buttazzo, M.~Giaquinta, and S.~Hildebrandt.
\newblock {\em One-dimensional Variational Problems: An Introduction}.
\newblock Number~15 in Oxford Lecture Series in Math. Clarendon Press, 1998.

\bibitem{chamra2017anisotropic}
M.~{Chmara} and J.~{Maksymiuk}.
\newblock Anisotropic {O}rlicz-{S}obolev spaces of vector valued functions and
  {L}agrange equations.
\newblock {\em {J. Math. Anal. Appl.}}, 456(1):457–475, 2017.

\bibitem{clarke1990optimization}
F.~Clarke.
\newblock {\em Optimization and Nonsmooth Analysis}.
\newblock Classics in Applied Mathematics. SIAM, Philadelphia, 1990.

\bibitem{clarke2013functional}
F.~Clarke.
\newblock {\em Functional Analysis, Calculus of Variations and Optimal
  Control}.
\newblock Graduate Texts in Mathematics. Springer London, 2013.

\bibitem{donaldson1971orlicz}
T.~Donaldson and N.~Trudinger.
\newblock {O}rlicz-{S}obolev spaces and imbedding theorems.
\newblock {\em Journal of Functional Analysis}, 8(1):52–75, 1971.

\bibitem{ioffe1972subdifferentials}
A.~Ioffe and V.~Levin.
\newblock Subdifferentials of convex functions.
\newblock {\em Trudy Moskovskogo Matematicheskogo Obshchestva}, 26:3–73,
  1972.

\bibitem{li2015infinitely}
C.~Li, R.~Agarwal, and C-L Tang.
\newblock Infinitely many periodic solutions for ordinary $p$-{L}aplacian
  systems.
\newblock {\em Adv. Nonlinear Anal.}, 4(4):251–261, 2015.

\bibitem{li2014periodic}
C.~Li, Z-Q. Ou, and C-L. Tang.
\newblock Periodic solutions for non-autonomous second-order differential
  systems with $(q, p)$-{L}aplacian.
\newblock {\em Electronic Journal of Differential Equations}, 2014(64):1–13,
  2014.

\bibitem{lian2017periodic}
H.~Lian, D.~Wang, D.~O’Regan, and R.~Agarwal.
\newblock Periodic solutions of nonautonomous second-order differential
  equations with a $p$-{L}aplacian.
\newblock {\em Analysis}, 37(1):1–11, 2017.

\bibitem{ma2009existence}
S.~Ma and Y.~Zhang.
\newblock Existence of infinitely many periodic solutions for ordinary
  $p$-{L}aplacian systems.
\newblock {\em Journal of Mathematical Analysis and Applications},
  351(1):469–479, 2009.

\bibitem{m}
L.~Maligranda.
\newblock {\em Orlicz spaces and interpolation}, volume~5 of {\em Seminários
  de Matemática [Seminars in Mathematics]}.
\newblock Universidade Estadual de Campinas, Departamento de Matemática,
  Campinas, 1989.

\bibitem{mawhin2010critical}
J.~Mawhin and M.~Willem.
\newblock {\em Critical Point Theory and Hamiltonian Systems}.
\newblock Applied Mathematical Sciences. Springer, 2010.

\bibitem{Mazzone2019}
F.~D. Mazzone and S.~Acinas.
\newblock Periodic solutions of {Euler}-{Lagrange} equations in an anisotropic
  {Orlicz}-{Sobolev} space setting.
\newblock {\em Rev. Unión Mat. Argent.}, 60(2):323–341, 2019.

\bibitem{Ning2015}
Y.~Ning and T.~An.
\newblock Existence and multiple solutions for nonautonomous second order
  systems with nonsmooth potentials.
\newblock {\em Kodai Mathematical Journal}, 38(3), 2015.

\bibitem{papageorgiou2009handbook}
N.~Papageorgiou and S.~Kyritsi-Yiallourou.
\newblock {\em Handbook of applied analysis}, volume~19.
\newblock Springer Science \& Business Media, 2009.

\bibitem{pacsca2007periodic}
D.~Paşca.
\newblock Periodic solutions of second-order differential inclusions systems
  with $p$-{L}aplacian.
\newblock {\em Journal of Mathematical Analysis and Applications},
  325(1):90–100, 2007.

\bibitem{pasca2010periodic}
D.~Paşca.
\newblock Periodic solutions of a class of nonautonomous second order
  differential systems with $(q, p)$-{L}aplacian.
\newblock {\em Bulletin of the Belgian Mathematical Society-Simon Stevin},
  17(5):841–851, 2010.

\bibitem{Pasca2011}
D.~Paşca.
\newblock Periodic solutions of second-order differential inclusions systems
  with $(q, p)$-{L}aplacian.
\newblock {\em Analysis and Applications}, 09(02):201–223, 2011.

\bibitem{pacsca2008subharmonic}
D.~Paşca and C.~Tang.
\newblock Subharmonic solutions for nonautonomous sublinear second-order
  differential inclusions systems with p-{L}aplacian.
\newblock {\em Nonlinear Analysis: Theory, Methods \& Applications},
  69(4):1083–1090, 2008.

\bibitem{pacsca2010some}
D.~Paşca and C-L. Tang.
\newblock Some existence results on periodic solutions of nonautonomous
  second-order differential systems with $(q, p)$-{L}aplacian.
\newblock {\em Applied Mathematics Letters}, 23(3):246–251, 2010.

\bibitem{pasca2011some}
D.~Paşca and C-L. Tang.
\newblock Some existence results on periodic solutions of ordinary $(q,
  p)$-{L}aplacian systems.
\newblock {\em Journal of Applied Aathematics \& Informatics},
  29(1\_2):39–48, 2011.

\bibitem{pasca2016periodic}
D.~Paşca and Z.~Wang.
\newblock On periodic solutions of nonautonomous second order hamiltonian
  systems with $(q, p) $-{L}aplacian.
\newblock {\em Electronic Journal of Qualitative Theory of Differential
  Equations}, 2016(106):1–9, 2016.

\bibitem{2002applications}
M.~M. Rao and Z.~D. Ren.
\newblock {\em Applications of {O}rlicz spaces}.
\newblock Marcel Dekker, Inc., New York, 2002.

\bibitem{rao1991theory}
M.M. Rao and Z.D. Ren.
\newblock {\em Theory of {O}rlicz spaces}, volume 146.
\newblock M. Dekker, 1991.

\bibitem{orliczvectorial2005}
G.~Schappacher.
\newblock A notion of {O}rlicz spaces for vector valued functions.
\newblock {\em Appl. Math.}, 50(4):355–386, 2005.

\bibitem{tang1995periodic}
C-L. Tang.
\newblock Periodic solutions of non-autonomous second-order systems with
  $\gamma$-quasisubadditive potential.
\newblock {\em Journal of Mathematical Analysis and Applications},
  189(3):671–675, 1995.

\bibitem{tang1998periodic}
C.-L. Tang.
\newblock Periodic solutions for nonautonomous second order systems with
  sublinear nonlinearity.
\newblock {\em Proc. Amer. Math. Soc.}, 126(11):3263–3270, 1998.

\bibitem{tang2001periodic}
C.~L. Tang and X.-P. Wu.
\newblock Periodic solutions for second order systems with not uniformly
  coercive potential.
\newblock {\em J. Math. Anal. Appl.}, 259(2):386–397, 2001.

\bibitem{Tang2010}
X.~Tang and X.~Zhang.
\newblock Periodic solutions for second-order {H}amiltonian systems with a
  {$p$}-{L}aplacian.
\newblock {\em Ann. Univ. Mariae Curie-Skłodowska Sect. A}, 64(1):93–113,
  2010.

\bibitem{tian2007periodic}
Y.~Tian and W.~Ge.
\newblock Periodic solutions of non-autonomous second-order systems with a
  {$p$}-{L}aplacian.
\newblock {\em Nonlinear Anal.}, 66(1):192–203, 2007.

\bibitem{trudinger1974imbedding}
N.~Trudinger.
\newblock An imbedding theorem for {$H^0(G,\Omega{})$}-spaces.
\newblock {\em Studia Mathematica}, 50(1):17–30, 1974.

\bibitem{wu1999periodic}
X.-P. Wu and C.-L. Tang.
\newblock Periodic solutions of a class of non-autonomous second-order systems.
\newblock {\em J. Math. Anal. Appl.}, 236(2):227–235, 1999.

\bibitem{xu2007some}
B.~Xu and C.-L. Tang.
\newblock Some existence results on periodic solutions of ordinary
  {$p$}-{L}aplacian systems.
\newblock {\em J. Math. Anal. Appl.}, 333(2):1228–1236, 2007.

\bibitem{yang2012periodic}
X.~Yang and H.~Chen.
\newblock Periodic solutions for a nonlinear $(q, p)$-{L}aplacian dynamical
  system with impulsive effects.
\newblock {\em Journal of Applied Mathematics and Computing},
  40(1-2):607–625, 2012.

\bibitem{yang2013existence}
X.~Yang and H.~Chen.
\newblock Existence of periodic solutions for sublinear second order dynamical
  system with $(q, p)$-laplacian.
\newblock {\em Mathematica Slovaca}, 63(4):799–816, 2013.

\bibitem{zhang2012non}
X.~Zhang and X.~Tang.
\newblock Non-constant periodic solutions for second order {H}amiltonian system
  with a $p$-{L}aplacian.
\newblock {\em Mathematica Slovaca}, 62(2):231–246, 2012.

\bibitem{zhao2004periodic}
F.~Zhao and X.~Wu.
\newblock Periodic solutions for a class of non-autonomous second order
  systems.
\newblock {\em J. Math. Anal. Appl.}, 296(2):422–434, 2004.

\end{thebibliography}

\end{document}